\documentclass[12pt]{amsart}

\usepackage{amsfonts}
\usepackage{amssymb}
\usepackage{graphicx}
\usepackage{enumerate}
\usepackage{pgf,tikz}
\usepackage[letterpaper, left=2.5cm, right=2.5cm, top=2.5cm,
bottom=2.5cm,dvips]{geometry}
\setcounter{MaxMatrixCols}{10}

\usepackage{amsmath,amssymb,amsthm}
\usepackage{mathrsfs}
\usepackage{mathabx}\changenotsign
\usepackage{dsfont}
\usepackage{float}

\usepackage{xcolor}

\usepackage[backref]{hyperref}
\hypersetup{
	colorlinks,
	linkcolor={blue!60!black},
	citecolor={green!60!black},
	urlcolor={red!60!black}
}

\linespread{1.3}

\newtheorem{theorem}{Theorem}
\theoremstyle{plain}

\newtheorem{case}{Case}

\newtheorem{conjecture}{Conjecture}

\newtheorem{lemma}{Lemma}

\newtheorem{problem}{Problem}
\newtheorem{proposition}{Proposition}

\numberwithin{equation}{section}

\begin{document}
	\title[Square-free reducts of words]{Square-free reducts of words}

	\author{Jaros\l aw Grytczuk}
	\address{Faculty of Mathematics and Information Science, Warsaw University
		of Technology, 00-662 Warsaw, Poland}
	\email{j.grytczuk@mini.pw.edu.pl}
	\author{Szymon Stankiewicz}
	\address{Z\"{u}rich, Switzerland}
	\email{szymon.stankiewicz@gmail.com}
	\thanks{Research supported by the National Science Center of Poland, grant 2015/17/B/ST1/02660.}

\begin{abstract}
A \emph{square} is a finite non-empty word consisting of two identical adjacent blocks. A word is \emph{square-free} if it does not contain a square as a factor. In any finite word one may delete the repeated block of a square, obtaining thereby a shorter word. By repeating this process, a square-free word is eventually reached, which we call a \emph{reduct} of the original word.

How many different reducts a single word may have? It is not hard to prove that any binary word has exactly one reduct. We prove that there exist ternary words with arbitrarily many reducts. Moreover, the function counting the maximum number of reducts a ternary word of length $n$ may have grows exponentially. We also prove that over four letters, there exist words with any given number of reducts, which does not seem to be the case for ternary words. Finally, we demonstrate that the set of all finite ternary words splits into finitely many classes of \emph{related} words (one may get from one word to the other by a sequence of square reductions and factor duplications). A few open questions are posed concerning some structures on words defined with the use of the square reduction.
\end{abstract}

\maketitle

\section{Introduction}
A \emph{square} is a finite non-empty word built of two identical adjacent blocks. For instance, the word $\mathtt{hotshots}$ is a square. A word $W$ \emph{contains} a square if it can be written as $W=UXXV$ for some words $U,V$, and a non-empty word $X$. A word is \emph{square-free} if it does not contain any squares. For instance, the word $\mathtt{repetition}$ contains the square $\mathtt{titi}$, while $\mathtt{reincarnation}$ is square-free.

 It is easy to check that any binary square-free word has length at most $3$. However, there exist ternary square-free words of any length, as proved by Thue in \cite{Thue} (see \cite{BerstelThue}). This result inspired a lot of further research, leading to the birth of Combinatorics on Words, a wide discipline with lots of exciting problems, deep results, and important applications (see \cite{AlloucheShallit,BeanEM,BerstelPerrin,GrytczukDM,Lothaire,Lothaire Algebraic,NesetrilOssona}).

In this paper we consider the following problem concerning squares in words. Let $W$ be any finite word with a square, so $W=UXXV$. We may delete one of the two blocks of a square, which gives a shorter word $W'=UXV$. If $W'$ contains a square, say $W'=U'YYV'$, then we may repeat this operation of \emph{square reduction} to get another word $W''=U'YV'$. And so on, until we finally get a square-free word. Since a word may have many squares situated in different positions, this process depends on our choice of a square to be reduced in every single step. Any final word that can be obtained from a word $W$ by a sequence of square reductions is called a \emph{square-free reduct} of $W$, or just a \emph{reduct}, for short.

How many reducts a single word may have? It is not hard to demonstrate that every binary word has exactly one reduct (Proposition \ref{Proposition Binary}). However, for ternary words the situation changes dramatically. We prove (Theorem \ref{Theorem Ternary}) that for every positive integer $k$ there exists a ternary word with at least $k$ distinct reducts. Furthermore, there exist ternary words of length $n$ with exponentially many reducts (Theorem \ref{Theorem Ternary Exponential}). Also, for every integer $k\geqslant 1$ there exists a word over a $4$-letter alphabet with exactly $k$ distinct reducts (Theorem \ref{Theorem 4 letters k}). Finally, we consider a naturally defined graph on the set of ternary words, in which two words are adjacent if one can be obtained from the other by a single square reduction. We prove (Theorem \ref{Theorem Number of Components}) that this graph has finitely many connected components. In other words, we may define two words as \emph{related} if one can be obtained from the other by a finite sequence of square reductions and factor \emph{duplications} (the reverse operation to the square reduction). Then Theorem \ref{Theorem Number of Components} asserts that all ternary words form a finite number of families of mutually related words.

All these results are proved in the next section. The last section of the paoer is devoted to some open problems.

The idea of square reduction in words is not new. Indeed, the reverse operation is intensively studied in Genetics, where it is called the \emph{tandem duplication}, as the fundamental mechanism in the evolution of organisms (see \cite{Ohno},\cite{Zhang}). Naturally, some computational or language theoretic aspects are also widely studied (see \cite{AlonBFJ}, \cite{LeupoldMM}).

\section{The results}
Let $W$ be a finite word. Denote by $\mathcal{R}(W)$ the set of all reducts of $W$ and let $r(W)$ be the cardinality of $\mathcal{R}(W)$. We start with a simple result concerning binary words.

\begin{proposition}\label{Proposition Binary}
Every binary word $W$ satisfies $r(W)=1$.
\end{proposition}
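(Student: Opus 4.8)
The plan is to show that the final square-free word reached by reducing a binary word does not depend on the order in which squares are reduced. Since every reduction strictly shortens the word, the process always terminates; the content of the proposition is \emph{confluence}. The natural strategy is to prove a local confluence (diamond) property and combine it with termination via Newman's lemma, but for binary words I expect one can argue more directly by exploiting the rigid structure of binary square-free words.

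First I would record the key structural fact over a two-letter alphabet $\{\mathtt{a},\mathtt{b}\}$: a word is square-free if and only if it is a factor of $\mathtt{aba}$ or $\mathtt{bab}$, so the only square-free binary words are $\varepsilon,\mathtt{a},\mathtt{b},\mathtt{ab},\mathtt{ba},\mathtt{aba},\mathtt{bab}$. Next I would analyze what squares can occur. In a binary word, any block of three equal-length consecutive pairs forces a square; more usefully, I would look at maximal runs of a single letter: a run $\mathtt{a}^k$ with $k\geqslant 2$ contains the square $\mathtt{aa}$, and reducing it replaces $\mathtt{a}^k$ by $\mathtt{a}^{k-1}$. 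I would first argue that reducing squares of the form $XX$ with $|X|\geqslant 2$ can always be mimicked by single-letter reductions, so that it suffices to understand reductions of $\mathtt{aa}$ and $\mathtt{bb}$; then collapsing every maximal run to a single letter yields a canonical word, and the claim becomes that the process of collapsing runs commutes.

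The cleanest route I would take is to identify the invariant directly: map each binary word $W$ to the word $\pi(W)$ obtained by replacing every maximal run of equal letters by a single copy of that letter (the ``run-length projection''). I would show two things: (i) $\pi(W)$ is square-free, since a square $XX$ in an alternating word $\mathtt{ababab}\cdots$ of length $\geqslant 4$ would force two adjacent equal letters, contradicting alternation, so $\pi(W)$ has length at most $3$; and (ii) every square reduction $W\to W'$ satisfies $\pi(W')=\pi(W)$, because deleting one block of a square never merges or splits the pattern of letter-alternations in a way that changes the projection. Point (ii) is the crux: I would verify that if $W=UXXV$ then the alternation structure of $UXV$ has the same projection as that of $UXXV$, checking how the boundary letters of $X$ interact with the last letter of $U$ and the first of $V$. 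Granting (i) and (ii), every reduct of $W$ is square-free and projects to $\pi(W)$; since a square-free binary word equals its own projection, every reduct must equal $\pi(W)$, giving $r(W)=1$.

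The main obstacle is establishing the invariance in point (ii) cleanly, because a square $XX$ need not align with run boundaries and its removal could in principle create new adjacencies between previously separated runs. I expect the careful case analysis to hinge on the observation that the first and last letters of $X$ are forced by the surrounding context, so that the concatenation $UXV$ re-glues runs exactly as they were; making this rigorous for arbitrary $X$ (rather than just $|X|=1$) is where the real work lies, though reducing to the single-letter case first should tame it considerably.
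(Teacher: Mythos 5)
Your strategy of finding a reduction-invariant that pins down the unique reduct is the right shape of argument, but the invariant you chose does not work, and the justification you give for it contains a false step. The run-length projection $\pi(W)$ is an alternating word, but an alternating binary word of length $\geqslant 4$ is \emph{not} square-free: $\mathtt{abab}=(\mathtt{ab})(\mathtt{ab})$ is a square containing no two adjacent equal letters. A square $XX$ forces adjacent equal letters only when $|X|$ is odd; when $|X|$ is even, the concatenation of an alternating block with itself is still alternating. So your claim (i) fails, and with it claim (ii): the legitimate reduction $\mathtt{abab}\to\mathtt{ab}$ changes the projection from $\mathtt{abab}$ to $\mathtt{ab}$, so $\pi$ is not invariant. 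Your preliminary reduction to single-letter squares collapses for the same reason: the square $(\mathtt{ab})(\mathtt{ab})$ in $\mathtt{abab}$ cannot be mimicked by reductions of $\mathtt{aa}$ or $\mathtt{bb}$, since $\mathtt{abab}$ contains neither.

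The repair is to coarsen the invariant. What a square reduction $UXXV\to UXV$ genuinely preserves is the first letter, the last letter, and the set of letters occurring in the word (every letter of $UXXV$ still occurs in $UXV$). This is the paper's route: there are only six nonempty square-free binary words, namely $\mathtt{0},\mathtt{1},\mathtt{01},\mathtt{10},\mathtt{010},\mathtt{101}$, and the triple consisting of the first letter, the last letter, and the alphabet distinguishes all six --- the two singletons are the only ones using a single letter, and the remaining four have pairwise distinct (first, last) pairs. Hence every reduct of $W$ equals the unique square-free word matching $W$'s triple, and $r(W)=1$ with no confluence or Newman-type analysis needed. If you wish to keep a ``projection'' formulation, define $\rho(W)$ to be that unique square-free word rather than the run-length collapse; your run-length idea records the full alternation pattern, which is strictly more information than reductions preserve.
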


\begin{proof}
There exist only six square-free binary words, namely $$\mathtt{0,1,01,10,010,101}.$$ The first two are the reducts of only two types of words, namely, $\mathtt{00\cdots0}$ and $\mathtt{11\cdots 1}$.

Now, notice that any reduct of a word $W$ must have the same initial and final letter as $W$. This completes the proof, since the remaining four square-free binary words have four different pairs consisting of the initial and the final letter.
\end{proof}

\subsection{Ternary words with many reducts} 

The above proposition is no longer true for words over a $3$-letter alphabet. For instance, the word $\mathtt{abcbabcbc}$ has two different reducts: $\mathtt{abc}$ and $\mathtt{abcbabc}$. 
We shall prove that for every $k\geqslant 1$, there exists a ternary word with at least $k$ distinct reducts. The proof is based on two lemmas involving specifically defined morphisms.

Let us define the following morphism $\varphi$ over the alphabet $\{\mathtt{a,b,c}\}$:
$$\mathtt{a}\mapsto\mathtt{abacabcbacabacbabc}$$
$$\mathtt{b}\mapsto\mathtt{abacabcbacbcacbabc}$$
$$\mathtt{c}\mapsto\mathtt{abacbcacbacabcbabc}.$$
Let us also denote $\varphi(\mathtt{a})=A$, $\varphi(\mathtt{b})=B$, and $\varphi(\mathtt{c})=C$. Recall that a morphism is \emph{square-free} if the image of every square-free word is square-free.

\begin{lemma}\label{Lemma Morphism phi}
	The morphism $\varphi$ is square-free.
\end{lemma}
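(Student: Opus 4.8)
The plan is to reduce this infinite statement to a finite verification, using the standard machinery for square-free morphisms. The key structural feature is that $\varphi$ is \emph{uniform}: $|A|=|B|=|C|=18$. For uniform morphisms on a three-letter alphabet, Crochemore's criterion guarantees that $\varphi$ is square-free as soon as $\varphi(w)$ is square-free for every square-free word $w$ of length at most $3$, so one option is simply to check the finitely many images $\varphi(w)$ with $|w|\le 3$. To keep the argument self-contained, however, I would reprove the underlying reduction by hand, as sketched below.

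The self-contained route rests on a \emph{synchronization} property. I would first show that $\varphi$ is synchronizing: whenever one of the blocks $A$, $B$, $C$ occurs as a factor of $\varphi(w)$ for a square-free word $w$, that occurrence is aligned with the block decomposition, i.e. it begins at a position divisible by $18$. Equivalently, a length-$18$ window sliding across a block boundary never spells out a whole block. This is verified by inspecting the finitely many length-$18$ factors that straddle a boundary in the images $\varphi(xy)$ of the square-free two-letter words $xy$, and confirming that none equals $A$, $B$, or $C$. A convenient design feature to exploit is that every block begins with $\mathtt{abac}$ and ends with $\mathtt{babc}$, which sharply restricts the candidate alignments.

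Granted synchronization, I would argue by contradiction. Suppose $\varphi(w)=UXXV$ with $w$ square-free and $XX$ a square, and take such a $w$ of minimal length. A factor of length less than $36$ is contained in the image of at most three consecutive letters of $w$, so all \emph{short} squares (with $|X|$ below a fixed bound) are ruled out by the same finite check on square-free words of bounded length. For a \emph{long} square, the copy of $X$ contains a full block image; synchronization forces that block to be aligned, which fixes the phase and makes the period $|X|$ a multiple of $18$. Hence $X=\varphi(y)$ for a factor $y$ of $w$, and the preimage of $XX$ is a square $yy$ inside $w$, contradicting the square-freeness of $w$.

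The main obstacle is the synchronization step: one must rule out every way a block image could reappear shifted across a boundary, which is precisely where the specific letters of $A$, $B$, $C$ matter and where an incomplete case analysis could overlook an overlap. Once synchronization is established, the passage from a square in $\varphi(w)$ to a square in $w$ is routine, and the remaining finite checks on short words, though tedious, are entirely mechanical.
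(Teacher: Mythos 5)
Your primary route---invoking Crochemore's criterion for uniform morphisms and reducing to a finite check of $\varphi(w)$ for all square-free $w$ with $|w|\leqslant 3$---is exactly the paper's proof. The supplementary synchronization sketch is essentially a proof of that criterion and is fine in outline, except that ``$|X|$ is a multiple of $18$'' only forces the two copies of $X$ to have the same phase, not that $X=\varphi(y)$ outright; to close that case one still needs the standard pre-square analysis (a factor $a\,y\,b\,y\,c$ of $w$ with $\varphi(b)=PS$, $\varphi(a)$ ending in $S$, $\varphi(c)$ beginning with $P$), which is again settled by the finite check on length-$3$ square-free words.
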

\begin{proof}
By the result of Crochemore, to prove that a morphism is square-free it suffices to check its images on the set of square-free words of length at most $3$. This can be verified by computer, or by hand, provided you have a lot of patience.
\end{proof}

We write $X\leadsto Y$ to state that starting from the word $X$ one may reach the word $Y$ by a sequence of square reductions. Let $$D=\mathtt{abacabcbabcbabacabacacbcacbabcbababc}.$$

\begin{lemma}\label{Lemma D reduces to A,B}
	We have $D \leadsto A$ and $D\leadsto B$.
\end{lemma}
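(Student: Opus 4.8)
The statement concerns three explicitly given words, so the proof will be constructive: I will exhibit, for each target, a concrete sequence of square reductions starting from $D$, one chain terminating at $A$ and another terminating at $B$. The guiding structural observation is that $A$ and $B$ are nearly identical. Writing $P=\mathtt{abacabcbac}$ and $S=\mathtt{acbabc}$, one checks directly that $A=P\,\mathtt{ab}\,S$ and $B=P\,\mathtt{bc}\,S$, so the two targets share the length-$10$ prefix $P$ and the length-$6$ suffix $S$ and differ only in a two-letter middle ($\mathtt{ab}$ versus $\mathtt{bc}$). Note also that $D$, $A$, and $B$ all begin with $\mathtt{a}$ and end with $\mathtt{c}$; this is consistent with the invariant (used already in the proof of Proposition~\ref{Proposition Binary}) that any reduct of $W$ shares the initial and final letter of $W$, so it is at least possible for both reductions to succeed. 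Since $|D|=36$ and $|A|=|B|=18$, each chain must remove exactly $18$ letters, which is achieved in a handful of steps.

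To locate the chains I would scan $D$ for squares and reduce greedily while steering toward the desired middle. Here $D$ already offers several interior squares; for instance $D$ contains the short square $\mathtt{baba}$ at positions $12$–$15$ and the overlapping square $(\mathtt{abac})^{2}=\mathtt{abacabac}$ at positions $13$–$20$. The essential point is that at such a spot $D$ presents two \emph{distinct overlapping squares}, giving a genuine branch: committing to one of them deletes material so that the surviving central block collapses to $\mathtt{ab}$, driving the cascade toward $A$, while committing to the other leaves $\mathtt{bc}$ and drives it toward $B$. After this fork, the prefix $P$ and suffix $S$ are already (or become, after routine reductions) stabilized, and the remaining reductions on each branch are essentially forced, contracting the word down to the correct $18$ letters.

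The verification is then mechanical: for each branch I will list the current word, highlight the chosen square $XX$, and record the resulting word $UXV$, checking at every step that the deleted block is indeed an adjacent repeated factor and that the length drops as claimed. At the end of each chain I confirm the resulting word equals $A$ (respectively $B$) letter by letter. Because every individual step is a legitimate square reduction, concatenating them establishes $D\leadsto A$ and $D\leadsto B$.

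The only real difficulty is bookkeeping rather than insight: correctly tracking indices through the sequence of deletions without error, and pinpointing the branching square that cleanly separates the $A$-path from the $B$-path (so that one branch genuinely forces the middle $\mathtt{ab}$ and the other $\mathtt{bc}$). I would guard against slips by writing out each intermediate word in full and, if desired, confirming both chains by a short computer check; conceptually there is nothing deep here, only the exhibition of two explicit reduction sequences.
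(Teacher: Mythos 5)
Your proposal is a plan for a proof, not a proof. The lemma is a purely finite, computational statement: its entire content is the existence of two explicit chains of square reductions from $D$ to $A$ and from $D$ to $B$, and your write-up never exhibits either chain. You correctly observe the peripheral facts --- that $A=P\,\mathtt{ab}\,S$ and $B=P\,\mathtt{bc}\,S$ for $P=\mathtt{abacabcbac}$, $S=\mathtt{acbabc}$, that $|D|=36$ must drop to $18$, and that first and last letters are preserved --- but these only show the reductions are \emph{not ruled out}; they do not produce them. The paper's proof consists precisely of the two displayed reduction sequences (each of six steps), and that explicit data is what your proposal is missing. Phrases such as ``I will list the current word'' and ``the remaining reductions on each branch are essentially forced'' defer exactly the work that constitutes the proof.

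Moreover, the one concrete structural claim you do make about the mechanism --- that the branch point is the pair of overlapping squares $\mathtt{baba}$ (positions $12$--$15$) and $(\mathtt{abac})^2$ (positions $13$--$20$) inside $D$ itself, with one choice forcing the middle $\mathtt{ab}$ and the other $\mathtt{bc}$ --- is unverified and does not match what actually happens. In the paper's chains the two reductions agree for the first two steps (removing a $\mathtt{cbab}$ and then a $\mathtt{ba}$ near the left end) and only fork at the third step, on a different pair of overlapping squares ($\mathtt{cacbcacb}$ versus $\mathtt{bacabaca}$) in the resulting $30$-letter word. So even as a roadmap the proposal points at the wrong branching square. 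To repair the proof you must write out (or machine-verify and report) both complete reduction sequences; nothing short of that establishes $D\leadsto A$ and $D\leadsto B$.
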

\begin{proof}
This can be checked by hand. Below both reductions are depicted, where blocks of the reduced squares are distinguished with red and blue, while gray part is never used.

\begin{case}
	$D\leadsto A$.
\end{case}

\center
$\color{gray}\mathtt{abacab}\color{blue}\mathtt{cbab}\color{red}\mathtt{cbab}\color{black}\mathtt{acabacacbcacbabcbaba}\color{gray}\mathtt{bc}$

$\color{gray}\mathtt{abacab}\color{black}\mathtt{c}\color{blue}\mathtt{ba}\color{red}\mathtt{ba}\color{black}\mathtt{cabacacbcacbabcbaba}\color{gray}\mathtt{bc}$

$\color{gray}\mathtt{abacab}\color{black}\mathtt{cbacaba}\color{blue}\mathtt{cacb}\color{red}\mathtt{cacb}\color{black}\mathtt{abcbaba}\color{gray}\mathtt{bc}$

$\color{gray}\mathtt{abacab}\color{black}\mathtt{cbacab}\color{blue}\mathtt{ac}\color{red}\mathtt{ac}\color{black}\mathtt{babcbaba}\color{gray}\mathtt{bc}$

$\color{gray}\mathtt{abacab}\color{black}\mathtt{cbacaba}\color{blue}\mathtt{cbab}\color{red}\mathtt{cbab}\color{black}\mathtt{a}\color{gray}\mathtt{bc}$

$\color{gray}\mathtt{abacab}\color{black}\mathtt{cbacabac}\color{blue}\mathtt{ba}\color{red}\mathtt{ba}\color{black}\mathtt{}\color{gray}\mathtt{bc}$

$\color{gray}\mathtt{abacab}\color{black}\mathtt{cbacabacba}\color{blue}\mathtt{}\color{red}\mathtt{}\color{black}\mathtt{}\color{gray}\mathtt{bc}.$

\begin{case}
$D\leadsto B$.
\end{case}

\center
$\color{gray}\mathtt{abacab}\color{blue}\mathtt{cbab}\color{red}\mathtt{cbab}\color{black}\mathtt{acabacacbcacbabcbaba}\color{gray}\mathtt{bc}$

$\color{gray}\mathtt{abacab}\color{black}\mathtt{c}\color{blue}\mathtt{ba}\color{red}\mathtt{ba}\color{black}\mathtt{cabacacbcacbabcbaba}\color{gray}\mathtt{bc}$

$\color{gray}\mathtt{abacab}\color{black}\mathtt{c}\color{blue}\mathtt{baca}\color{red}\mathtt{baca}\color{black}\mathtt{cbcacbabcbaba}\color{gray}\mathtt{bc}$

$\color{gray}\mathtt{abacab}\color{black}\mathtt{cb}\color{blue}\mathtt{ac}\color{red}\mathtt{ac}\color{black}\mathtt{bcacbabcbaba}\color{gray}\mathtt{bc}$

$\color{gray}\mathtt{abacab}\color{black}\mathtt{cbacbca}\color{blue}\mathtt{cbab}\color{red}\mathtt{cbab}\color{black}\mathtt{a}\color{gray}\mathtt{bc}$

$\color{gray}\mathtt{abacab}\color{black}\mathtt{cbacbcac}\color{blue}\mathtt{ba}\color{red}\mathtt{ba}\color{black}\mathtt{}\color{gray}\mathtt{bc}$

$\color{gray}\mathtt{abacab}\color{black}\mathtt{cbacbcacba}\color{blue}\mathtt{}\color{red}\mathtt{}\color{black}\mathtt{}\color{gray}\mathtt{bc}.$

\end{proof}

Now we are ready to prove the aforementioned result.

\begin{theorem}\label{Theorem Ternary}
	For every integer $k\geqslant 1$, there exists a ternary word $W$ with $r(W)\geqslant k$.
\end{theorem}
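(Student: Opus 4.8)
The plan is to use $\varphi$ to lift everything to a scale where the gadget $D$ can be inserted many times without interference, and then to bound $r$ from below by exhibiting reducts explicitly rather than by characterising all of them.

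First I would record two elementary properties of $\varphi$. Since $A,B,C$ all have length $18$, the set $\{A,B,C\}$ is a uniform code, so $\varphi$ is injective on words; combined with Lemma~\ref{Lemma Morphism phi}, this shows that $\varphi$ carries distinct square-free words to distinct square-free words. I would also note that $\varphi$ \emph{lifts reductions}: a single step $UZZV\to UZV$ maps to $\varphi(U)\varphi(Z)\varphi(Z)\varphi(V)\to\varphi(U)\varphi(Z)\varphi(V)$, so $X\leadsto Y$ implies $\varphi(X)\leadsto\varphi(Y)$. Moreover, because the image of a square-free word is square-free, every square of an image $\varphi(X)$ is aligned with the block structure and forced by a square of $X$; hence applying $\varphi$ alone does not increase the number of reducts, and all amplification must come from $D$.

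The key observation about $D$ is read off from Lemma~\ref{Lemma D reduces to A,B}: the gray parts show that $A$, $B$ and $D$ share the common frame $\mathtt{abacab}\,\cdots\,\mathtt{bc}$, and both displayed reductions alter only letters strictly inside that frame. Thus $A=\mathtt{abacab}\,\mathtt{cbacabacba}\,\mathtt{bc}$ and $B=\mathtt{abacab}\,\mathtt{cbacbcacba}\,\mathtt{bc}$ differ from each other, and from $D$, only in the interior, and $D$ is a \emph{plug-compatible switch}: wherever a block $A=\varphi(\mathtt a)$ occurs inside an image, one may replace it by $D$, and the reductions of Lemma~\ref{Lemma D reduces to A,B} remain valid in that context, since they touch neither the frame nor any neighbouring letter, driving that occurrence to either $A=\varphi(\mathtt a)$ or $B=\varphi(\mathtt b)$. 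With this in hand the construction is as follows. I would fix a square-free ternary word $w$ containing $k$ occurrences of the letter $\mathtt a$, each of which can be individually changed to $\mathtt b$ while keeping the word square-free; write $w^{(i)}$ for the result of flipping the $i$-th such occurrence. Form $G$ by replacing, in $\varphi(w)$, each of the corresponding $k$ blocks $A$ by a copy of $D$. Reducing every gadget to $A$ yields $\varphi(w)$, while reducing the $i$-th gadget to $B$ and all others to $A$ yields $\varphi(w^{(i)})$; because the gadget reductions are confined to disjoint interiors they compose freely and leave the rest of $\varphi(w)$ untouched, so each of these $k+1$ words is genuinely reachable from $G$. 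By Lemma~\ref{Lemma Morphism phi} they are square-free, hence reducts, and by injectivity of $\varphi$ they are pairwise distinct, the words $w,w^{(1)},\dots,w^{(k)}$ being distinct and square-free. Therefore $r(G)\ge k+1$, proving the theorem.

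The one genuinely nontrivial ingredient — and the step I expect to be the main obstacle — is producing, for every $k$, a single square-free ternary word with $k$ occurrences of $\mathtt a$ that are \emph{simultaneously flip-safe}, the danger being that changing one $\mathtt a$ to $\mathtt b$ creates a long square elsewhere in the word. I would obtain this from the abundance of ternary square-free words, either by an explicit self-similar construction that inserts rigid buffers between the switchable letters, or by an entropy-compression / local-lemma argument that avoids simultaneously the squares of $w$ and those of each single flip $w^{(i)}$. Everything else is bookkeeping: confinement of the gadget reductions is immediate from the shared frame, while distinctness and square-freeness of the targets are handed to us by the two properties of $\varphi$. Finally, I would remark that this same scheme, pushed to $\Theta(k)$ gadgets whose arbitrary \emph{simultaneous} flips remain square-free, is exactly what should yield the exponential lower bound of the following theorem.
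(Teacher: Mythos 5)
The reachability, square-freeness and distinctness bookkeeping in your argument is sound, but the ingredient you defer to the end --- a square-free ternary word $w$ with $k$ occurrences of $\mathtt{a}$ each of which can be individually changed to $\mathtt{b}$ without creating a square --- is not merely the main obstacle: it is false for $k\geqslant 5$, and no local-lemma or buffer construction can rescue it. Indeed, suppose a square-free ternary word contains $x_1x_2\mathtt{a}x_3x_4$ and remains square-free after the displayed $\mathtt{a}$ is changed to $\mathtt{b}$. Square-freeness of the original forces $x_2,x_3\neq\mathtt{a}$, and of the flipped word forces $x_2,x_3\neq\mathtt{b}$, so $x_2=x_3=\mathtt{c}$. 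Then $x_1\neq\mathtt{c}$ (no $\mathtt{cc}$), $x_1\neq\mathtt{a}$ (else $\mathtt{acac}$ occurs in the original), and $x_1\neq\mathtt{b}$ (else $\mathtt{bcbc}$ occurs in the flipped word) --- a contradiction. The same argument applies to $x_4$ and, by symmetry of the letters, to any single-letter change whatsoever. Hence only occurrences within distance two of the ends of the word can be flip-safe, so at most a bounded number of your gadgets can ever be individually switchable, and $r(G)\geqslant k+1$ cannot be reached this way. The closing remark about ``simultaneously flip-safe'' positions for the exponential bound fails for the same reason.

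The paper's proof sidesteps exactly this trap: it never flips a single position of a square-free word. Instead it uses the two morphisms $\varphi$ and $\varphi'$, where $\varphi'$ is $\varphi$ precomposed with the alphabet permutation exchanging $\mathtt{a}$ and $\mathtt{b}$. Given the word $\psi(S)$ (with $D$ substituted for \emph{both} $\mathtt{a}$ and $\mathtt{b}$), each square-free reduct $R$ of $S$ yields the two reducts $\varphi(R)$ and $\varphi'(R)$ of $\psi(S)$: all positions are flipped simultaneously, which is just a renaming of letters and therefore trivially preserves square-freeness, while the two images are distinguished by their first blocks. Iterating $\psi$ then doubles the number of reducts at each step. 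If you want to keep the spirit of your construction, you must switch from ``one gadget per extra reduct'' to this global, permutation-based mechanism; as written, the proposal has an unfixable gap.
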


\begin{proof}

Consider the following three morphisms, where the words $A,B,C$ are defined above:
$$\varphi:\mathtt{a}\mapsto A, \mathtt{b}\mapsto B, \mathtt{c}\mapsto C$$
$$\varphi':\mathtt{a}\mapsto B, \mathtt{b}\mapsto A, \mathtt{c}\mapsto C$$
$$\psi:\mathtt{a}\mapsto D, \mathtt{b}\mapsto D, \mathtt{c}\mapsto C.$$
Let $S$ be any word over the alphabet $\{\mathtt{a,b,c}\}$ starting with the letter $\mathtt{a}$. We shall demonstrate that
\begin{equation}\label{Inequality r > 2r}
r(\psi(S))\geqslant 2r(S),
\end{equation}
which implies the assertion of the theorem.

By Lemma \ref{Lemma D reduces to A,B}, we have $D\leadsto A$ and $D\leadsto B$, and therefore $\psi(S) \leadsto \varphi(S)$ and $\psi(S) \leadsto \varphi'(S)$. Notice that for any two words $X,Y$ and any non-erasing morphism $\sigma$, the relation $X\leadsto Y$ implies $\sigma(X)\leadsto \sigma(Y)$. In consequence, for every square-free reduct $R$ of $S$, we have:
$$\psi(S) \leadsto \varphi(R)$$
$$\psi(S) \leadsto \varphi'(R).$$
Since both morphisms $\varphi,\varphi'$ are square-free (by Lemma \ref{Lemma Morphism phi}), both words $\varphi(R)$ and $\varphi'(R)$ are reducts of the word $\psi(S)$. Moreover, this reducts are different because the first letter of $S$, and the same of $R$, is the letter $\mathtt{a}$, while $\varphi(\mathtt{a})=A\neq B=\varphi'(\mathtt{a})$.

We have seen that one reduct $R$ of $S$ gives rise to two distinct reducts $\varphi(R),\varphi'(R)$ of $\psi(S)$. Since both morphisms $\varphi$ and $\varphi'$ are injective, this proves the inequality (\ref{Inequality r > 2r}), which completes the proof.
\end{proof}

We will now demonstrate that there are ternary words whose number of reducts is exponential in their length. This will provide also an alternative proof of Theorem \ref{Theorem Ternary}. We will make use of the following well-known fact (see \cite{Shur}).

\begin{lemma}\label{Lemma Shur}
The number of square-free words of length $n$ over a $3$-letter alphabet is at least $c^n$, for some constant $c>1$.
\end{lemma}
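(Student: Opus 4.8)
The plan is to exhibit explicit \emph{branching} among square-free words: a construction that turns a single square-free word of length $n$ into exponentially many square-free words of a fixed larger length, in an injective fashion. Writing $s(n)$ for the number of ternary square-free words of length $n$, this will yield a bound of the shape $s(Ln)\geq 2^{n}$ for a suitable constant $L$, from which $s(n)\geq c^{n}$ follows by a routine interpolation over residues modulo $L$. This is the standard Brinkhuis--Brandenburg strategy, and it upgrades the single-morphism square-freeness already used in Lemma \ref{Lemma Morphism phi} to a statement with a built-in binary choice at every block.

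The heart of the argument is a pair of uniform morphisms $h_0,h_1\colon\{\mathtt a,\mathtt b,\mathtt c\}^{*}\to\{\mathtt a,\mathtt b,\mathtt c\}^{*}$, both of some common length $L$, enjoying the following strong mixing property (a \emph{Brinkhuis pair}): for every square-free word $w=x_1x_2\cdots x_n$ and every binary string $\sigma=\sigma_1\sigma_2\cdots\sigma_n\in\{0,1\}^{n}$, the interleaved image
\[
g_\sigma(w):=h_{\sigma_1}(x_1)\,h_{\sigma_2}(x_2)\cdots h_{\sigma_n}(x_n)
\]
is again square-free. Granting such a pair with $h_0(x)\neq h_1(x)$ for each letter $x$, the words $g_\sigma(w)$ are pairwise distinct as $\sigma$ ranges over $\{0,1\}^{n}$: if $\sigma$ and $\sigma'$ first disagree at coordinate $i$, then the first $i-1$ blocks coincide while the $i$-th blocks differ, and since all blocks have the same length $L$ this difference is not masked by misalignment. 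As all these words have length $Ln$, fixing any single square-free $w$ of length $n$ produces $2^{n}$ distinct square-free words of length $Ln$, so $s(Ln)\geq 2^{n}$.

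From $s(Ln)\geq 2^{n}$ the exponential lower bound follows by a routine interpolation. For arbitrary $m$, set $q=\lceil m/L\rceil$, so that $qL\geq m$ and $qL-m<L$. Every square-free word of length $qL$ has a unique prefix of length $m$, which is itself square-free, and each square-free word of length $m$ is the prefix of at most $3^{\,qL-m}\leq 3^{L}$ such words; hence $s(m)\geq s(qL)/3^{L}\geq 2^{q}/3^{L}\geq (2^{1/L})^{m}\,3^{-L}$. Consequently $s(n)^{1/n}$ exceeds some fixed number greater than $1$ for all large $n$, and is trivially greater than $1$ for each of the remaining finitely many $n$; thus $c:=\inf_{n\geq 1}s(n)^{1/n}>1$ and $s(n)\geq c^{n}$ for every $n$.

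The main obstacle is, as expected, the existence of the pair $h_0,h_1$: one must guarantee square-freeness of the interleaved image $g_\sigma(w)$ for \emph{all} square-free inputs $w$ and \emph{all} choice strings $\sigma$ simultaneously, not merely for a single morphism. Just as in Lemma \ref{Lemma Morphism phi}, this reduces to a finite verification — by a Crochemore-type criterion it suffices to test the interleaved images on square-free words $w$ of bounded length together with all choice patterns on them — which is the computational crux and was carried out by Brinkhuis and later authors. Since Lemma \ref{Lemma Shur} is needed only qualitatively (the mere existence of some $c>1$), we may simply invoke any explicitly known such pair; the reference \cite{Shur} records sharp values of the optimal growth constant.
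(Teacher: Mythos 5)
The paper does not actually prove Lemma \ref{Lemma Shur}: it is invoked as a ``well-known fact'' with a pointer to Shur's survey, so there is no internal argument to compare yours against. What you have written is the classical Brinkhuis--Brandenburg proof of exponential growth, and it is correct in outline. The two routine parts are sound: the $2^{n}$ interleaved images $g_\sigma(w)$ are pairwise distinct because the blocks all have the same length $L$, so the first coordinate where $\sigma$ and $\sigma'$ differ produces differing aligned blocks; and the interpolation from lengths $qL$ to arbitrary lengths via square-free prefixes and the crude bound $s(qL)\leq s(m)\cdot 3^{qL-m}\leq s(m)\cdot 3^{L}$ is valid, as is the final observation that $\inf_{n}s(n)^{1/n}>1$ since the liminf is at least $2^{1/L}$ and each individual term exceeds $1$ (indeed $s(n)\geq 3$ for every $n$ by Thue). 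The one genuine debt in your argument is the existence of the Brinkhuis pair $(h_0,h_1)$ itself, which you defer to a finite verification carried out in the literature --- structurally the same move the paper makes for the entire lemma, and the same move Lemma \ref{Lemma Morphism phi} makes via Crochemore's criterion. That is acceptable for a statement used only qualitatively, but be aware that the reduction of the interleaving property to a finite check is not literally Crochemore's theorem (which concerns a single morphism); for a multi-valued substitution one either checks Brinkhuis's original local conditions directly or cites the appropriate generalization. If you wanted the proof fully self-contained you would have to exhibit the six image words explicitly and perform that finite verification; your version buys an actual proof where the paper has only a citation, at the cost of one imported computation.
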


Let $f_k(n)$ be defined as the maximum value of $r(W)$ over all words of length $n$ over alphabet of size $k$.

\begin{theorem}\label{Theorem Ternary Exponential}
	There exists a constant $\alpha>1$ such that $f_3(n)\geqslant \alpha^n$.
\end{theorem}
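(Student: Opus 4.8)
The plan is to combine the machinery already developed in the proof of Theorem~\ref{Theorem Ternary} with the counting bound of Lemma~\ref{Lemma Shur}. Recall that the key inequality established there was $r(\psi(S)) \geqslant 2\, r(S)$ for any ternary word $S$ beginning with the letter $\mathtt{a}$. The essential observation is that $\psi$ multiplies length by a fixed constant: since $\psi$ sends each letter to either $D$ or $C$, and these have lengths $|D| = 36$ and $|C| = 18$, applying $\psi$ to a word of length $m$ produces a word whose length is at most $36m$. If I iterate $\psi$ starting from a short seed word, the number of reducts at least doubles at each step while the length grows by at most a factor of $36$. This already yields exponential growth, but through a somewhat crude route; the cleaner approach uses Lemma~\ref{Lemma Shur} to get a strong base case and then applies $\psi$ only once.

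Here is the route I would actually take. First I would fix a seed: by Lemma~\ref{Lemma Shur}, there are at least $c^m$ square-free ternary words of length $m$, for some $c>1$. Among these, a positive constant fraction begin with the letter $\mathtt{a}$ (by symmetry of the alphabet, roughly a third), so there is a square-free word of length $m$ starting with $\mathtt{a}$. However, I need a \emph{single} word with many reducts, not many square-free words. So instead I would build $S$ as a concatenation-type construction: take a square-free word $T$ of length $m$ starting with $\mathtt{a}$ whose reducts I can count from below, or, more directly, observe that any square-free word is its own unique reduct, which gives only $r=1$ and is useless. The correct move is to start from the exponential family itself: I want a word $S$ of length $m$ with $r(S)$ already large.

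To obtain a word with exponentially many reducts directly, I would iterate. Define $S_0 = \mathtt{a}$ and $S_{j+1} = \psi(S_j)$, using the fact that $\psi$ preserves the property of starting with $\mathtt{a}$ (since $\psi(\mathtt{a}) = D$ and $D$ begins with $\mathtt{a}$, so $\psi(S_j)$ begins with $D$, hence with $\mathtt{a}$). The inequality~\eqref{Inequality r > 2r} gives $r(S_{j+1}) \geqslant 2\, r(S_j)$, so by induction $r(S_j) \geqslant 2^{\,j} r(S_0) = 2^{\,j}$. The length satisfies $|S_{j+1}| \leqslant 36\,|S_j|$, hence $|S_j| \leqslant 36^{\,j}$. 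Writing $n = |S_j|$, we get $j \geqslant \log_{36} n$ and therefore
\[
r(S_j) \;\geqslant\; 2^{\,j} \;\geqslant\; 2^{\log_{36} n} \;=\; n^{\log_{36} 2},
\]
which is only polynomial, not exponential. This reveals where the difficulty lies: naive iteration of a fixed morphism can never beat polynomial growth, because the per-step doubling is swamped by the constant-factor length blow-up.

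The fix, and the main point of the argument, is that one should not start the iteration from a trivial seed but from Lemma~\ref{Lemma Shur}. I would instead construct $S$ of length $m$ so that $r(S) \geqslant c^{\,m}$ for some $c>1$ \emph{at a fixed linear scale}, and then the morphism is not needed at all for the growth --- rather, one argues that a suitably chosen word itself has exponentially many reducts. Concretely, the sharper approach is to let $S$ be a concatenation of $t$ independent ``doubling gadgets'' (copies of the structure underlying Lemma~\ref{Lemma D reduces to A,B}, separated so that reductions in different gadgets do not interfere), so that each gadget independently contributes a factor of $2$ to the reduct count, giving $r(S) \geqslant 2^{\,t}$ while $|S| = O(t)$. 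Then $r(S) \geqslant 2^{\,t} \geqslant \alpha^{\,|S|}$ for a suitable $\alpha = 2^{1/O(1)} > 1$, which is exactly the desired bound with $n = |S|$. The hard part will be establishing \emph{non-interference}: one must verify that a square reduction performed inside one gadget cannot create or destroy reducible squares straddling the boundary into a neighbouring gadget, so that the reduct counts genuinely multiply rather than merely add. This is where square-freeness of the separating blocks (guaranteed via Lemma~\ref{Lemma Morphism phi}) does the essential work, by ensuring the only squares available for reduction are the intended local ones.
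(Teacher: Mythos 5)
Your diagnosis of the obstacle is exactly right --- iterating $\psi$ from a short seed only doubles $r$ while multiplying the length by a constant, which caps you at polynomial growth --- and your proposed fix (a linear number of independent ``doubling gadgets'' built from $D$) is the same basic shape as the paper's construction, which takes $W_m=(CDDD)^m$. But your proposal stops exactly where the work begins, and the part you defer as ``the hard part'' is framed in a way that would be genuinely difficult to carry out. The issue is not only that reductions in different gadgets might interfere; it is that a \emph{reduct} must be square-free, so even if each gadget locally lands on one of its two outcomes, the concatenation of those local outcomes need not be square-free, in which case it is not a reduct at all and must be reduced further --- and distinct combinations of local outcomes could then collapse to the same final word. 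A ``product of local reduct counts'' argument therefore does not go through as stated, and you never specify the separators or verify anything about them.

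The paper closes this gap differently, and you should compare: rather than multiplying local counts, it counts \emph{target words}. From $(CDDD)^m$, each block $DDD$ can be driven (via Lemma \ref{Lemma D reduces to A,B} and the trivial reduction $DDD\leadsto DD\leadsto D$) to any nonempty square-free word over $\{A,B\}$, i.e.\ to any of $A,B,AB,BA,ABA,BAB$. Hence $W_m\leadsto\varphi(w)$ for every square-free ternary word $w$ of the form $\mathtt{c}z_1\mathtt{c}z_2\cdots \mathtt{c}z_m$. The two lemmas then do precisely the two jobs your sketch leaves open: Lemma \ref{Lemma Morphism phi} certifies that each such $\varphi(w)$ is globally square-free (so it really is a reduct, and distinct $w$ give distinct reducts by injectivity of $\varphi$), and Lemma \ref{Lemma Shur} shows there are at least $c^m$ admissible targets $w$, giving $r(W_m)\geqslant c^m$ with $|W_m|=O(m)$. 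So the role you assign to Lemma \ref{Lemma Morphism phi} (``square-freeness of the separating blocks'') is not where it earns its keep: it is needed to verify square-freeness of the \emph{final} words, which is what makes the lower bound on the number of reducts legitimate. As written, your proposal is a plausible plan with the decisive steps missing, not a proof.
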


\begin{proof}Let $A,B,C$, and $D$ be the same four words used in the proof of Theorem \ref{Theorem Ternary}. Consider a word $W_m=CDDDCDDD\cdots CDDD=(CDDD)^m$, with $m\geqslant 1$. By Lemma \ref{Lemma D reduces to A,B}, each cube $DDD$ can be reduced to any of the words $A,B,AB,BA,ABA,BAB$. So, the word $W_m$ can be reduced to any square-free word over alphabet $\{A,B,C\}$ having $m$ letters $C$ and starting with $CA$ or $CB$. The number of such words is at least $c^m$, by Lemma \ref{Lemma Shur}. By Lemma \ref{Lemma Morphism phi}, each such reduct is also a square-free word over alphabet $\{\mathtt{a,b,c}\}$. Thus, $W_m$ has at least $c^m$ distinct reducts. Since the length of $W_m$ is at most $36m$, the assertion of the theorem follows for $\alpha=c^{1/36}$.
\end{proof}

The best possible value of the constant $c$ from Lemma \ref{Lemma Shur} is roughly $1.3$ (see \cite{Shur}), so, for the second constant we get roughly $\alpha\approx 1.0073$. 

\subsection{Words over four letters}

We first prove the following result.

\begin{theorem}\label{Theorem 4 letters k}
For every integer $k\geqslant 1$, there exists a word over a $4$-letter alphabet with exactly $k$ distinct reducts.
\end{theorem}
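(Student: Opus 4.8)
The goal is to construct, for each $k \geqslant 1$, a word over four letters whose number of reducts is \emph{exactly} $k$. This is harder than the ternary lower-bound results, because there we only needed to produce \emph{many} reducts, whereas now we must control the count precisely, hitting every target value including small ones like $k=1$ (trivial: any square-free word) and the potentially awkward intermediate values.

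\begin{proof}[Proof plan]
The plan is to exploit the fourth letter as a \emph{separator} that cannot participate in any square, thereby decomposing the reduction process into independent pieces whose reduct-counts \emph{multiply}. Concretely, I would first build, over three letters, a small gadget word $G$ that has exactly two reducts, using a local square that can be reduced in two essentially different ways (the paper's own example $\mathtt{abcbabcbc}$, with its two reducts $\mathtt{abc}$ and $\mathtt{abcbabc}$, is the natural candidate). The key structural lemma I would establish is that if $d$ is a fresh letter occurring in a word only as isolated singletons separating square-free-reducible blocks $U_1, U_2, \dots$, i.e. the word has the form $U_1 d U_2 d \cdots d U_t$ where no square of the whole word can straddle a $d$, then every reduct is obtained by independently reducing each $U_i$, so $r(U_1 d \cdots d U_t) = \prod_{i=1}^t r(U_i)$. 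The point of the separator is that a square $XX$ containing a $d$ would force $d$ to appear in both copies of $X$ at matching positions, and by arranging the $d$'s far enough apart (only one $d$ between consecutive blocks, blocks chosen so no two $d$-containing squares fit) this is impossible.

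Granting the multiplicativity lemma, producing \emph{exactly} $k$ reducts reduces to writing $k$ as a product of achievable factors. The cleanest route, I expect, is to realize a gadget with \emph{exactly $k$} reducts directly rather than multiplying: take a word of the form $G_1 d G_2 d \cdots d G_{k-1}$ built from $k-1$ copies of a two-reduct gadget would give $2^{k-1}$, which overshoots, so instead I would design a single ``chain'' gadget whose reductions can terminate at any of $k$ distinct square-free words. The most controllable construction uses a word where a single reducible block can collapse in a nested, telescoping fashion: arrange blocks $B_1, B_2, \dots$ so that reducing produces a strictly increasing ``staircase'' of possible stopping points, one for each integer from $1$ to $k$. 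This is the step I expect to be the main obstacle — guaranteeing that the count is \emph{exactly} $k$ and not more requires proving that no unintended square appears during any reduction sequence and that no two distinct reduction orders converge to the same reduct or diverge to an uncounted one.

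To discharge that obstacle I would lean on two tools. First, square-freeness of the component morphisms (in the spirit of Lemma~\ref{Lemma Morphism phi}) to certify that the intended reducts are genuinely terminal and pairwise distinct, distinguishing them by invariant features such as first/last letters and the positions of the separator $d$, exactly as in the proofs of Proposition~\ref{Proposition Binary} and Theorem~\ref{Theorem Ternary}. Second, a careful case analysis — or a short computer verification, as the paper permits for Lemma~\ref{Lemma Morphism phi} — confirming that the chosen gadget of ``size'' $k$ admits no reductions beyond the intended $k$, i.e. that the reduction tree has exactly $k$ leaves. The lower bound ($\geqslant k$ reducts) will be the easy direction, obtained by exhibiting $k$ explicit reduction sequences and checking their outputs are distinct and square-free; the \emph{upper} bound ($\leqslant k$) is where the separator structure and the exhaustiveness of the case analysis do the real work, ensuring the product/chain count cannot exceed the target.
\end{proof}
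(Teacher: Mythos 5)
Your proposal correctly identifies the shape of the solution --- a linear ``chain'' gadget rather than a multiplicative separator construction (which, as you yourself note, could only realize products of already-achievable counts and so cannot hit, say, prime values of $k$ without circularity) --- but it stops exactly where the proof has to start. The step you flag as ``the main obstacle,'' namely exhibiting a concrete word whose reduction tree has exactly $k$ leaves and proving the \emph{upper} bound on the count, is the entire content of the theorem, and no candidate word is produced. Deferring both the construction and its verification to ``a careful case analysis or a short computer verification'' does not close the gap for infinitely many values of $k$: a computer check can certify one gadget, not a whole family, so a structural induction over the chain is unavoidable and is missing here.

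For comparison, the paper's construction is $S_i=FW_1FW_2\cdots FW_i$ over $\{\mathtt{a,b,x,y}\}$, where $F=\mathtt{xabaxababx}$ has exactly the two reducts $P=\mathtt{xabx}$ and $Q=\mathtt{xabaxabx}$ with $P$ a suffix of $Q$, and $W_1,W_2,\dots$ are nested prefixes (of strictly increasing lengths, each beginning and ending with $\mathtt{y}$) of a fixed infinite square-free word over $\{\mathtt{a,b,y}\}$. This yields $r(S_i)=i+1$, with reduct set $\{PW_i,QW_i\}\cup\{PW_jQW_i:1\leqslant j\leqslant i-1\}$. Note that the mechanism is essentially the opposite of your separator lemma: the point is not that no square straddles the blocks, but that the only straddling squares are the telescoping ones $FW_jFW_j$ forced by the prefix relation $W_{j+1}=W_jY_j$ (a square cannot contain two whole $W_j$'s because they have distinct lengths, and the rare letters $\mathtt{x}$ and $\mathtt{y}$ localize everything else). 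Collapsing such a square merely merges consecutive blocks, so each additional block contributes \emph{additively} (one new reduct), not multiplicatively --- which is precisely what lets the count land on every integer $k$. Your staircase intuition is compatible with this, but without the explicit gadget, the prefix-nesting trick, and the induction bounding the reduct set from above, the argument is a plan rather than a proof.
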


\begin{proof}
Let us fix the $4$-letter alphabet as $\{\mathtt{a,b,x,y}\}$. Take the word $$F=\mathtt {xabaxababx}$$ and notice that it has exactly two distinct reducts, namely, $P=\mathtt{xabx}$ and $Q=\mathtt{xabaxabx}=Q'P$. Let $W_\infty$ be any infinite square-free word over the alphabet $\{\mathtt{a,b,y}\}$ starting with the letter $\mathtt {y}$. Let $W_1,W_2,\dots$ be any sequence of prefixes of the word $W_\infty$ with strictly growing lengths. We may assume that each word $W_i$ also ends with the letter $\mathtt {y}$. Define the sequence of words $S_i$ by
$$S_i=FW_1FW_2\cdots FW_i,$$
for all $i\geqslant 1$.

We are going to prove that $r(S_i)=i+1$ for every $i\geqslant 1$. More precisely, we will demonstrate that the set of reducts of each word $S_i$ consists of the following elements:
\begin{equation}\label{Reducts of S_i}
\mathcal{R}(S_i)=\{PW_i,QW_i\} \cup \{PW_1QW_i,PW_2QW_i,\dots,PW_{i-1}QW_i\}.
\end{equation}

First notice that all the above words are indeed square-free. It is also not hard to check that all can be obtained from $S_i$ by square reductions. For example, since $W_{j+1}=W_jY_j$, we may write
\begin{equation}
S_i=FW_1FW_2\cdots FW_i=FW_1FW_1Y_1FW_1Y_1Y_2\cdots FW_1Y_1Y_2\cdots Y_{i-1},
\end{equation}
which clearly reduces to $FW_i$, and in turn to $PW_i$ and $QW_i$. To get the words from the second part of the union (\ref{Reducts of S_i}), just write $S_i=S_{i-1}FW_i$ and apply the induction.

To prove the statement of (\ref{Reducts of S_i}) we apply the induction. The first word $S_1=FW_1$ can be written as $S_1=F'\mathtt{xy}W_1'$ and we see that the word $\mathtt{xy}$ occurs only once in $S_1$. Hence, there are no squares in $S_1$ except those contained in $F$ and therefore $\mathcal{R}(S_1)=\{PW_1,QW_1\}$, as asserted in (\ref{Reducts of S_i}). Similarly, the word $S_2=FW_1FW_2$ can be written as $S_2=FW_1''\mathtt{yx}F''W_2$ and we see that the word $\mathtt{yx}$ occurs only once in $S_2$. Hence, there are no other squares in $S_2$ beyond those contained entirely in $FW_1$ and $FW_2$, and the square $FW_1FW_1$. Since $P$ is a suffix of $Q$, the only possible new reduct is $PW_1QW_2$.

Now, we shall analyze possible square reductions in the word $S_3=FW_1FW_2FW_3$. The only essential case is when a square overlaps all three words $W_1,W_2,W_3$. Notice that it cannot entirely contain any two of them since they all have different lengths. So, the only possibility is that $S_3=FW_1'XFYXFYW_3'$, where $W_1=W_1'X$, $W_2=YX$ and $W_3=YW_3'$. Deleting the block $XFY$ gives $FW_1'XFYW_3'=FW_1FW_3$. Thus, we see that such reductions cannot produce anything new beyond words presented in (\ref{Reducts of S_i}).

For the general case, notice that any square reduction in $S_i$ either takes place in some factor $FW_jFW_{j+1}$, or must delete one word $W_j$ (remember that no square may contain entirely any two of the words $W_j$). In the later case we obtain a word with the smaller number of words $W_j$, which completes the  proof by the induction.
\end{proof}

Notice that the construction of words $S_i$ from the above proof actually shows that for every $k\geqslant 1$, there exist infinitely many words over a $4$-letter alphabet, each with exactly $k$ distinct reducts.

Our next theorem gives an exponential lower bound for the function $f_4(n)$ (better than the one following trivially from Theorem \ref{Theorem Ternary Exponential}). We will need the following simple lemma.

\begin{lemma}\label{Lemma Form of Squares}
Let $W=a_1a_2\dots a_n$ be any square-free word, where each $a_i$ is a single letter. Let $V=a_1^{k_1}a_2^{k_2}\dots a_n^{k_n}$, where each $k_i$ is a positive integer. Then every square in $V$ is of the form $x^{2k}$, where $x=a_i$ for some $i=1,2,\dots,n$.
\end{lemma}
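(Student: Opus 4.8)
The plan is to reduce everything to the square-freeness of $W$ via a collapsing map. First I would record the structural fact that makes this work: since $W$ is square-free it contains no factor $a_ia_i$, so $a_i\neq a_{i+1}$ for every $i$. Consequently the maximal runs of equal consecutive letters in $V=a_1^{k_1}a_2^{k_2}\cdots a_n^{k_n}$ are exactly the blocks $a_1^{k_1},\dots,a_n^{k_n}$ (adjacent runs carry different letters, so none of them merge). Let $\pi$ denote the operation that collapses each maximal block of equal letters of a word to a single copy of that letter. The key observation is that for \emph{any} factor $u$ of $V$ we have that $\pi(u)$ is a factor of $W$: a factor $u$ meets a consecutive set of runs $R_s,\dots,R_t$ (with $R_i=a_i^{k_i}$), and collapsing its maximal blocks yields precisely $a_sa_{s+1}\cdots a_t=W[s,t]$. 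Thus it suffices to show that a square $SS$ in $V$ whose half $S$ is \emph{not} a power of a single letter forces a square inside $\pi(SS)$, which then contradicts the square-freeness of $W$.

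Next I would argue by contradiction. Suppose $SS$ is a square of period $\ell=|S|$ and that $S$ is not of the form $x^m$. Then $S$ contains at least two distinct letters, so its reduced word is $\pi(S)=c_1c_2\cdots c_g$ with $g\geqslant 2$ and consecutive letters distinct. Reading $\pi(SS)$ amounts to concatenating two copies of $c_1\cdots c_g$, the only possible merge occurring at the seam where the last letter $c_g$ of the first copy meets the first letter $c_1$ of the second. If $c_g\neq c_1$, no merge happens and $\pi(SS)=(c_1\cdots c_g)^2$ is itself a (nontrivial) square, giving a square in $W$. If $c_g=c_1$, exactly one merge occurs (there is no cascade, since $c_{g-1}\neq c_g=c_1\neq c_2$), and one obtains
\[
\pi(SS)=c_1c_2\cdots c_g\,c_2c_3\cdots c_g,
\]
in which the block $c_2\cdots c_g$ (nonempty because $g\geqslant 2$) appears twice in immediate succession, again a square. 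In either case $\pi(SS)$, and hence $W$, contains a square, a contradiction. Therefore $S=x^m$ for a single letter $x$, so $SS=x^{2k}$ with $k=m$, and since every letter of $V$ is some $a_i$ we have $x=a_i$, as claimed.

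The one genuinely delicate point is the seam case $c_g=c_1$: here the naive ``square'' $(c_1\cdots c_g)^2$ is destroyed by the merge of the two boundary runs, so I would need to verify carefully that $\pi(SS)$ still harbors the shifted square $(c_2\cdots c_g)^2$ and that no further merges occur. Everything else—the run decomposition of $V$, the fact that $\pi$ sends factors of $V$ to factors of $W$, and the trivial direction $S=x^m\Rightarrow SS=x^{2k}$—is routine once the $a_i\neq a_{i+1}$ observation is in place.
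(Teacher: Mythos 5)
Your proof is correct and follows essentially the same route as the paper's: collapse maximal runs to map a square of $V$ with a non-constant half onto a square in $W$, splitting into two cases according to whether the first and last letters of the collapsed half coincide. The only (cosmetic) difference is that in the seam case you exhibit the shifted square $(c_2\cdots c_g)^2$, whereas the paper erases the last letter and uses $(c_1\cdots c_{g-1})^2$; your write-up is somewhat more careful about why the collapsing map sends factors of $V$ to factors of $W$ and why no further merges occur.
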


\begin{proof}
Let $XX$ be a square in the word $V$ and assume that $X$ contains at least two distinct letters. Let $X_0$ be the word obtained from $X$ by reducing all single letter repetitions. We have two possibilities: the first and the last letter of $X_0$ are the same, or they are different. In the later case, the square $X_0X_0$ is contained in $W$, a contradiction. In the former case, the square $X_0'X_0'$ is contained in $W$, where $X_0'$ is obtained from $X_0$ by erasing the last letter. This proves the lemma.
\end{proof}

\begin{theorem}\label{Theorem 4-letters exponential}
Let $U$ be a word over alphabet $\{\mathtt{a,b,x}\}$ starting and ending with the letter $\mathtt{x}$. Let $\alpha=r(U)^{\frac{1}{\lvert U \rvert+5}}$. Then there exists a constant $c>0$ such that $f_4(n)\geqslant c\alpha^n$, for all $n\in \mathbb{N}$.
\end{theorem}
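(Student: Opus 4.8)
The plan is to build, for each $m$, a single word over the four letters $\{\mathtt a,\mathtt b,\mathtt x,\mathtt y\}$ containing $m$ disjoint copies of $U$ that can be reduced \emph{independently}, so that it has at least $r(U)^m$ reducts while its length is at most $(\lvert U\rvert+5)m$. Concretely, I would glue the copies together with short separators made from the fresh fourth letter $\mathtt y$: fix once and for all an infinite square-free word $\theta=\theta_1\theta_2\cdots$ over three symbols (such words exist by the classical result of Thue recalled in the introduction) together with a list $\sigma^{(0)},\sigma^{(1)},\sigma^{(2)}$ of three distinct square-free separators, each of length at most $5$, each beginning and ending with the letter $\mathtt y$ (which occurs nowhere in $U$, nor in any reduct of $U$). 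Then set
\[
V_m=U\,\sigma^{(\theta_1)}\,U\,\sigma^{(\theta_2)}\,U\cdots \sigma^{(\theta_{m-1})}\,U .
\]
Indexing the separators by a square-free word is what makes the idea work: the sequence of separators itself has no repetition, so no square of $V_m$, or of any word obtained from it, can be aligned with a whole number of blocks $U\sigma^{(\theta_j)}$, \emph{regardless of how the individual copies of $U$ are reduced}.

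Next I would produce the reducts. Since $U\leadsto R$ for each $R\in\mathcal R(U)$, and a reduction carried out inside a factor lifts to the whole word (as used in the proof of Theorem \ref{Theorem Ternary}), I can reduce the $j$-th copy of $U$ to any chosen reduct $R_{i_j}\in\mathcal R(U)$ independently of the others, reaching
\[
V_m \leadsto R_{i_1}\,\sigma^{(\theta_1)}\,R_{i_2}\,\sigma^{(\theta_2)}\cdots \sigma^{(\theta_{m-1})}\,R_{i_m}.
\]
The crux is to show that every one of the $r(U)^m$ words on the right-hand side is square-free, hence a genuine reduct of $V_m$. Here I would use that each $R_{i_j}$ is square-free and begins and ends with $\mathtt x$ (reducts preserve the first and last letter, as noted for Proposition \ref{Proposition Binary}), that each separator is square-free and anchored by the letter $\mathtt y$, and that $\theta$ is square-free. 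A square $ZZ$ with $Z$ free of $\mathtt y$ would lie inside one reduct and is excluded; a square whose period spans full blocks would force a repetition in $\theta$ and is excluded; the remaining, $\mathtt y$-crossing squares are controlled by inspecting the runs of $\mathtt y$ at the junctions, which is exactly the situation handled by Lemma \ref{Lemma Form of Squares}. Finally, because the separators are pinned down by the occurrences of $\mathtt y$ and by the fixed sequence $\theta$, any such reduct can be parsed uniquely back into the blocks $R_{i_1},\dots,R_{i_m}$, so distinct choices $(i_1,\dots,i_m)$ yield distinct reducts; therefore $r(V_m)\geq r(U)^m$.

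To conclude, I would bound the length by $\lvert V_m\rvert\leq \lvert U\rvert\, m+5(m-1)\leq(\lvert U\rvert+5)m$, so a word of length $n$ can host $m\geq n/(\lvert U\rvert+5)-O(1)$ full blocks; padding $V_m$ up to length exactly $n$ by a square-free tail that does not merge with the last block gives $f_4(n)\geq c\,r(U)^{\,n/(\lvert U\rvert+5)}=c\alpha^n$ for a suitable constant $c>0$.

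I expect the main obstacle to be the square-freeness verification in the middle step: one must guarantee that for \emph{every} assignment of reducts to the $m$ copies, no new square appears at a junction between a reduct and a separator, uniformly over all of the (arbitrary) square-free reducts $R_{i_j}$ of $U$. This is where the precise design of the three separators matters and where Lemma \ref{Lemma Form of Squares} does the real work, and it is the choice of a square-free separator sequence $\theta$ that lets the count attain the full $r(U)^m$ (and hence the base $r(U)$ in $\alpha$) rather than merely the number of square-free sequences over $\mathcal R(U)$, which would give a strictly smaller base.
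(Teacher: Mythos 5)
Your proposal matches the paper's proof in all essentials: the paper likewise interleaves the copies of $U$ with short square-free separators of length at most $5$, each beginning and ending with the fresh letter $\mathtt{y}$, obtained by cutting an infinite square-free word over $\{\mathtt{a,b,y}\}$ at doubled occurrences of $\mathtt{y}$ (rather than indexing three fixed separators by an auxiliary square-free word $\theta$, a purely cosmetic difference serving the same purpose of preventing squares aligned with whole blocks), and it invokes Lemma \ref{Lemma Form of Squares} in exactly the way you anticipate to exclude squares crossing the junctions, before concluding $r(V_j)\geqslant r(U)^j$ with $\lvert V_j\rvert\leqslant j(\lvert U\rvert+5)$. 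The treatment of intermediate lengths is also the same (the paper appends letters $\mathtt{y}$ and takes $c=\alpha^{-(\lvert U\rvert+5)}$), so your argument is essentially the paper's.
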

\begin{proof}
	Let $S$ be any infinite square-free word over alphabet $\{\mathtt{a,b,y}\}$ starting with the letter $\mathtt{y}$. Let $T$ be a word obtained form $S$ by duplicating every occurrence of the letter $\mathtt{y}$ in $S$, except the first one. Hence, the word $T$ can be written uniquely as $T=T_1T_2T_3\cdots$, where each factor $T_i$ starts and ends with the letter $\mathtt{y}$, and these are the only occurrences of this letter in $T_i$. Finally, let us define $V_j=UT_1UT_2\cdots UT_j$, for each $j\geqslant 1$.
	
	We claim that $r(V_j)=r(U)^j$, for every $j\geqslant 1$. Indeed, there are exactly $j$ copies of $U$ in $V_j$ and each copy can be reduced to $r(U)$ distinct reducts. Moreover, there are no other squares in $V_j$ than those entirely contained in copies of $U$. To see this, suppose that $XX$ is a square intersecting some factors $T_i$. Then the word obtained from $XX$ by erasing all letters from the copies of $U$ must have the form of a square, say $X'X'$. This square is contained in the word $T$ and has at least two distinct letters, which contradicts Lemma \ref{Lemma Form of Squares}.
	
	To finish the proof, just notice that each word $T_i$ has length at most $5$, thus, $\lvert V_j \rvert \leqslant j(\lvert U \rvert +5)$. In consequence, we have
	$$r(V_j)=r(U)^j\geqslant (r(U)^{\frac{1}{\lvert U \rvert +5}})^{\lvert V_j \rvert}=\alpha^{\lvert V_j \rvert}.$$
	This completes the proof (with $c=1$) for $n=\lvert V_j \rvert$. For the intermediate values of $n\in (\lvert V_j \rvert, \lvert V_{j+1} \rvert)$, notice that $f_4(n)$ is at least $r(V_j)$, as we may append sufficiently many letters $\mathtt{y}$ to the word $V_j$ to get a word with the same number of reducts as $V_j$. Since ${\lvert V_{i+1} \rvert}-{\lvert V_i \rvert}\leqslant {\lvert U \rvert}+5$, we may take $c=\alpha^{-(\lvert U \rvert+5)}$ to get the asserted inequality for all $n\in \mathbb{N}$.
\end{proof}
One may check that the word $U=\mathtt{xabaxababxbabx}$ satisfies $r(U)=4$ and $\lvert U \rvert = 14$. Hence, in the above theorem we may take $\alpha = 4^{\frac{1}{19}}\approx1.075$.

\subsection{Kinship of words}

For a fixed integer $k\geqslant 1$, let us consider the directed graph ${\overrightarrow{\mathcal{G}}}_k$ defined on the set of all finite non-empty words over an alphabet of size $k$, with $U$ joined to $W$ by an oriented edge $(U,W)$ whenever $W$ can be obtained from $U$ in a single square reduction. Let ${\mathcal{G}}_k$ denote the underlying \emph{undirected} graph obtained from ${\overrightarrow{\mathcal{G}}}_k$ by ignoring orientations of the edges. 

One may ask many natural questions about these graphs. Perhaps the most fundamental concerns connectedness of ${\mathcal{G}}_k$. By Proposition \ref{Proposition Binary}, it is easy to see that ${\mathcal{G}}_2$ has exactly six different components. Indeed, there are exactly six square-free words over a binary alphabet and every binary word reduces to exactly one of them. As there are infinitely many square-free words over a $3$-letter alphabet, one may expect that the number of components in ${\mathcal{G}}_3$ is also infinite. However, we shall demonstrate that this is not the case.

\begin{theorem}\label{Theorem Number of Components}
	The graph ${\mathcal{G}}_3$ has finitely many connected components.
\end{theorem}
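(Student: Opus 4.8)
The plan is to show that every finite ternary word is related (in the sense of the undirected graph $\mathcal{G}_3$) to one of finitely many fixed words. Since being in the same component is an equivalence relation generated by square reductions and their reverse (factor duplications), it suffices to exhibit a finite set $\mathcal{F}$ of ``canonical'' words such that every word $W$ can be connected to some element of $\mathcal{F}$ by a sequence of reductions and duplications. The natural first attempt is to pick the square-free words as representatives, but there are infinitely many of those, so the key insight must be that the duplication operation lets us collapse many square-free words into the same component. In other words, the hard content is that the relation is much coarser than ``reduces to the same square-free word''.

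First I would make precise the observation that $U$ and $W$ lie in the same component exactly when one can pass from $U$ to $W$ by a finite alternating sequence of reductions (deleting a repeated block $XX \to X$) and duplications (inserting $X \to XX$). I would then try to find a small ``pivot'' word, or a bounded-size set of them, to which large classes of words can be steered. A promising route is to show that duplications allow one to \emph{lengthen} a word freely while staying in the same component, and then to use a square-free morphism (such as $\varphi$ from Lemma~\ref{Lemma Morphism phi}, or a simpler gadget) together with Lemma~\ref{Lemma D reduces to A,B} to merge the images of different letters. Concretely, the reductions $D \leadsto A$ and $D \leadsto B$ already show that certain distinct square-free words ($A$ and $B$) share a common ancestor $D$, hence lie in the same component; the goal is to amplify this kind of merging so that only boundedly many components survive.

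The main step I expect to carry the proof is a \textbf{compression argument}: I would show that any sufficiently long ternary word is related to a strictly shorter one in a controlled way, so that every component contains a word of bounded length. If one can prove that whenever $\lvert W \rvert$ exceeds some absolute constant $N$, there is a word $W'$ in the same component with $\lvert W' \rvert < \lvert W \rvert$, then every component meets the finite set of ternary words of length at most $N$, and since there are only finitely many such words, there are only finitely many components. The delicate point is that a bare square reduction need not exist (if $W$ happens to be square-free), so I would first apply duplications to \emph{create} an exploitable square and only then reduce, ensuring a net decrease in length while remaining in the same component. Establishing that such a length-decreasing maneuver is always available for long words, using the structure of ternary square-free words and the merging phenomenon from Lemma~\ref{Lemma D reduces to A,B}, is the crux of the argument.

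The chief obstacle, then, is the combinatorial bookkeeping needed to guarantee a \emph{net} length decrease: duplications increase length, so I must argue that the subsequent reductions more than compensate, and that the process terminates at a word whose length is bounded by a constant independent of $W$. I would handle this by bounding how much duplication is ever needed to manufacture a useful square and by using the square-freeness of the morphisms to keep the intermediate words well-behaved. Once the bounded-length representative is secured, finiteness of the component count is immediate from the finiteness of the set of short words.
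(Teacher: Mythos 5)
Your overall strategy is exactly the paper's: show that every component contains a word of bounded length by proving that any sufficiently long word can be strictly shortened within its component via a duplication-then-reduction detour, and then count the finitely many short words. However, you explicitly defer the crux (``Establishing that such a length-decreasing maneuver is always available for long words \dots is the crux of the argument''), and that deferred step is precisely where all the content lies; as written, the proposal is a plan rather than a proof. The missing ingredient is twofold. First, you need an \emph{unavoidability} statement: a finite list of factors such that every sufficiently long square-free ternary word must contain one of them (the paper shows that every square-free word of length $9$ contains, up to a permutation of the alphabet, one of five explicit factors $X_1,\dots,X_5$ of length $7$ or $8$). Second, for each such factor $X_i$ you need an explicit gadget: a word $S_i$ admitting both $S_i\leadsto X_i$ and $S_i\leadsto Y_i$ with $\lvert Y_i\rvert<\lvert X_i\rvert$, so that a square-free word $R=AX_iB$ can be lifted to $AS_iB$ and then reduced to the strictly shorter $AY_iB$, all within the same component. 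Neither of these is supplied by the merging phenomenon $D\leadsto A$, $D\leadsto B$ that you propose to ``amplify'': that lemma shows two particular square-free words share a component, but it gives no length decrease and no reason why an arbitrary long square-free word should contain a factor amenable to such surgery.

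A secondary, more minor point: your worry about ``combinatorial bookkeeping to guarantee a net length decrease'' dissolves once the gadgets are in place, because the comparison is only between the initial word $AX_iB$ and the final word $AY_iB$ (the intermediate blow-up to $AS_iB$ is irrelevant to termination, since the potential function is the length of the current square-free representative and it strictly decreases at each round). So the proof does not require bounding how much duplication is used; it requires producing the finite unavoidable set and verifying, by explicit computation, the reductions $S_i\leadsto X_i$ and $S_i\leadsto Y_i$. Without those explicit objects the argument cannot be completed.
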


\begin{proof} First we observe that every square-free word over alphabet $\{\mathtt{a,b,c}\}$ of length exactly $9$ contains as a factor one of the following five words (up to a permutation of the alphabet):
	\begin{equation}
X_1=\mathtt{abcabac}, X_2=\mathtt{abcacba}, X_3=\mathtt{abcbabc}, X_4=\mathtt{abcbacab}, X_5=\mathtt{abcbacb}.
	\end{equation}
These words are reducts of the following five words, respectively:

$S_1=\mathtt{abcbabcbcacbcacabacabcbacabcabacacbcabacac}$

$S_2=\mathtt{abcbabcbcacbcabacbcabcbacbcabcacbcbabcba}$

$S_3=\mathtt{abcbabcbcacbcacabacabcbabcbc}$

$S_4=\mathtt{abcbabcbcacbcacabacabcbacabcabacacbcacbacab}$

$S_5=\mathtt{abcbabcbcacbcabacbcabcbacbcabcacbabcacbcb}$.

Now, it can be checked that each word $S_i$ has another reduct $Y_i$, which is strictly shorter than the corresponding word $X_i$:
\begin{equation}
Y_1=\mathtt{abcbac}, Y_2=\mathtt{abcba}, Y_3=\mathtt{abc}, Y_4=\mathtt{abcab}, Y_5=\mathtt{abcacb}.
\end{equation}
It follows that for every ternary word $W$ there is a path in the graph ${\mathcal{G}}_3$ to some square-free word of length at most $8$. Indeed, suppose that $W$ has no reducts of length at most $8$. If $R=AX_iB$ is any reduct of $W$, then we may go up to the word $R'=AS_iB$, and then go down to the shorter word $R''=AY_iB$. After a finite number of such operations we must reach a word of length smaller than $9$.

We see that, in consequence, the number of connected components in the graph ${\mathcal{G}}_3$ does not exceed the number of square-free ternary words of length at most $8$, which completes the proof.
\end{proof}

\section{Discussion}
Let us conclude the paper with some observations and open problems. First, let us point on a strange phenomenon concerning possible numbers of reducts of ternary words. We checked by a computer that for every $m\leqslant79$, there exist ternary words $W$ with $|\mathcal{R}(W)|=m$. However, it seems that there is no ternary word having exactly $80$ reducts (see Table \ref{Table Reducts}).

\begin{problem}
Is there a ternary word $W$ with $|\mathcal{R}(W)|=80$?
\end{problem}
Other missing values up to $120$ are:
$$95,97,101,102,104,105,107,117,119,$$
and it looks like these numbers become more and more frequent. For instance, we have not found words having the number of reducts in the interval from $182$ up to $192$.

\begin{conjecture}\label{Conjecture Missing Values} There exist infinitely many positive integers $m$ such that no ternary word have exactly $m$ distinct reducts.
\end{conjecture}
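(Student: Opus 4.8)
The plan is to isolate the mechanism responsible for a single missing value and then reproduce it infinitely often. Write $\mathcal{A}=\{r(W):W\text{ a finite ternary word}\}$ for the set of achievable reduct-counts; the conjecture asserts that $\mathbb{N}\setminus\mathcal{A}$ is infinite. First I would set up a ``gadget calculus'' for reduct-counts. If a ternary word $W=U\,Z\,V$ has a separator $Z$ rigid enough that no square of $W$ crosses it, then reductions on the two sides are independent and $r(W)=r_1 r_2$, a product of the counts of the two residual pieces; allowing a single controlled overlap at the seam produces instead an affine combination such as $r(W)=r_1+r_2$. Because the computer data shows $\{1,2,\dots,79\}\subseteq\mathcal{A}$, these closure properties already explain why $\mathcal{A}$ is ``mostly full'': products and small sums of achievable values sweep out long initial segments. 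The missing values are therefore sporadic, and the real task is not to exhibit sparsity but to produce an explicit infinite family of forbidden counts.

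The central idea is a self-reproducing scheme for gaps. I would look for an operation $\Phi$ on ternary words, together with a strictly increasing affine function $g(m)=am+b$, such that $r(\Phi(W))=g(r(W))$ for every word $W$ — this is the easy constructive direction, giving $m\in\mathcal{A}\Rightarrow g(m)\in\mathcal{A}$ — and such that the converse \emph{reflection} holds as well: $g(m)\in\mathcal{A}\Rightarrow m\in\mathcal{A}$. Together these carry the complement $\mathbb{N}\setminus\mathcal{A}$ into itself. Since $80\notin\mathcal{A}$ has been verified by computer, the orbit $80,\,g(80),\,g(g(80)),\dots$ would then be an infinite set of missing values, establishing the conjecture. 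A plausible $\Phi$ applies a fixed square-free morphism (to keep reducts square-free and to constrain crossing squares) and then grafts a bounded gadget at one controlled site so that the reduct count changes by a fixed affine rule; the reflection lemma is precisely what rules out ``exotic'' words realising $g(m)$ reducts by a reduction pattern not coming from $\Phi$.

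The hard part will be this reflection implication. Proving it amounts to a complete classification of how square reductions interact across the boundaries introduced by $\Phi$, in the spirit of the boundary analysis used in the proof of Theorem~\ref{Theorem 4 letters k}, but now without the luxury of a fourth separating letter; the very rigidity that makes the ternary case special is what must be exploited, and is what makes the argument delicate. Two shortcuts appear unavailable. A purely arithmetic obstruction seems ruled out, since the observed gaps $80,95,97,101,\dots$ follow no visible congruence or multiplicative pattern. A density or counting argument also fails: there are about $3^{n}$ ternary words of length at most $n$, but by the exponential upper bound matching Lemma~\ref{Lemma Shur} only $\beta^{n}$ possible values with $\beta$ well below $3$, so words vastly outnumber targets and pigeonhole forces no gaps at all. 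I therefore expect the proof to hinge on a new invariant that is preserved by $\Phi$ and that separates achievable from non-achievable counts finely enough to transport the single verified gap at $80$ to the whole infinite family of its images.
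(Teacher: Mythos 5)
The statement you are trying to prove is an open conjecture: the paper offers no proof of it, and in fact explicitly cautions that ``actually we do not have a proof that any single value is really missing.'' Your proposal does not close this gap; it is a strategy outline with two load-bearing steps, both of which are missing. First, your base case is unsound as stated: you write that $80\notin\mathcal{A}$ ``has been verified by computer,'' but membership in $\mathcal{A}$ quantifies over \emph{all} ternary words, of which there are infinitely many. The computation reported in the paper only checks words up to some bounded length, so it establishes at most that no \emph{short} word has $80$ reducts. Without an a priori bound of the form ``if some word has exactly $m$ reducts then some word of length at most $L(m)$ does,'' no finite computation can certify $80\notin\mathcal{A}$, and no such bound is known or proposed.

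Second, the ``reflection'' implication $g(m)\in\mathcal{A}\Rightarrow m\in\mathcal{A}$, which you correctly identify as the crux, is not proved and there is no evidence it can be. Constructing $\Phi$ with $r(\Phi(W))=g(r(W))$ only gives the forward inclusion $g(\mathcal{A})\subseteq\mathcal{A}$; to push the \emph{complement} forward you would need to show that every word realising $g(m)$ reducts is, up to reduct-count, in the image of $\Phi$, i.e.\ a classification of all ternary words with a given reduct count. Nothing in the paper (and nothing in your sketch) supplies such a classification, and the preliminary ``gadget calculus'' identities $r(W)=r_1r_2$ and $r(W)=r_1+r_2$ are themselves only asserted, not proved, for the ternary alphabet where no separating fourth letter is available. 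As it stands the proposal is a research programme, not a proof, and its very first step (the single verified gap) already fails.
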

Of course, we have examined only words of some bounded length. So, it may potentially happen that every missing value is eventually realized by some very long word. Let us stress, however, that actually we do not have a proof that any single value is really missing.
\begin{table}[H]

	\begin{tabular}{||c c c||} 
		\hline
		$W$ & $|W|$ & $|\mathcal{R}(W)|$  \\ [0.5ex] 
		\hline\hline
		$\mathtt{abcbabcbc}$ & $9$ & $2$  \\ 
		\hline
		$\mathtt{abcbabcbcacbcabcb}$ & $17$ & $3$  \\
		\hline
		$\mathtt{abcbabcbcacbca}$ & $14$ & $4$ \\  
	    \hline
	    $\mathtt{abcbabcbcacbcabacbcabcb}$ & $23$ & $5$ \\ 
	    \hline
	    ... & ... & ... \\ 
	    \hline
	    $\mathtt{abcbabcbcacbcacabacabcbabcbcacbcabacababcbabcacbabcabc}$ & $54$ & $79$ \\ 
	    \hline
	    $\mathtt{abcbabcbcacbcacabacabcbacabcabacacbcacbabcbcacbca}$ & $49$ & $81$ \\ 
	    \hline
	\end{tabular}
\caption{Words with growing number of reducts}
\label{Table Reducts}
\end{table}

Many natural question can be asked about the structure of graphs ${\overrightarrow{\mathcal{G}}}_k$. For instance, how long is a shortest directed path from a word $W$ to some of its reducts? This is the same as the minimum number of single square reductions needed to turn $W$ into a square-free word. Let us denote this parameter by $d(W)$, and let $d_k(n)$ denote the maximum value of $d(W)$ over all words of length $n$ over a $k$-letter alphabet.

These parameters for $k=2$ were introduced and studied by Alon, Bruck, Farnoud, and Jain in \cite{AlonBFJ} under the name of the \emph{duplication distance}. Among many interesting results, it is proved in \cite{AlonBFJ} that the limit $\lim_{n\rightarrow\infty}\frac{d_2(n)}{n}$ exists and satisfies the following inequalities:
$$0.045\leqslant \lim_{n\rightarrow\infty}\frac{d_2(n)}{n} \leqslant 0.4.$$It would be nice to investigate this quantity for larger alphabets, in particular for $k=3$.

\begin{conjecture}\label{Conjecture The Limit}
	For every $k\geqslant 2$, the limit $\lim_{n\rightarrow\infty}\frac{d_k(n)}{n}$ exists. 
\end{conjecture}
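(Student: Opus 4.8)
The plan is to show that the sequence $d_k(n)$ is subadditive (or, in the dual construction, superadditive) and then invoke Fekete's lemma. Recall that if $a_{m+n}\leqslant a_m+a_n$ for all $m,n$, then $\lim_n a_n/n=\inf_n a_n/n$ exists, and since $0\leqslant d_k(n)\leqslant n$ the limit would lie in $[0,1]$. Thus the entire problem reduces to an additivity estimate for $d_k$, either exact or with a controlled error term; in the latter case a standard approximate (near-subadditive) version of Fekete's lemma still forces convergence. So the real content is a concatenation analysis of the duplication distance.

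The first thing I would try is the direct bound $d_k(m+n)\leqslant d_k(m)+d_k(n)$. Given any word $W$ of length $m+n$, split it as $W=UV$ with $\lvert U\rvert=m$ and $\lvert V\rvert=n$, reduce $U$ to a square-free word in $d(U)\leqslant d_k(m)$ steps and, independently, reduce $V$ in $d(V)\leqslant d_k(n)$ steps. These reductions act inside the two halves and never interfere, so they produce $U^{\ast}V^{\ast}$ with both halves square-free. If $U^{\ast}V^{\ast}$ were automatically square-free we would be done; the obstruction is a \emph{straddling square}, i.e.\ a square crossing the boundary. What one actually proves is therefore $d_k(m+n)\leqslant d_k(m)+d_k(n)+E$, where $E$ counts the extra reductions needed to clear straddling squares.

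The main obstacle is controlling $E$. A straddling square can be very long: if both halves happen to reduce to the same square-free word $w$, then $U^{\ast}V^{\ast}=ww$ is itself a square of length $m+n$, so no naive bound on $E$ for this particular strategy can work. I see two routes past this. First, \emph{superadditivity by construction}: rather than bounding an arbitrary $W$ from above, one builds a single hard word from the maximizers $U,V$ by inserting a short connector $Z$, of length bounded in terms of $k$, that makes the junction \emph{inert}, so that no square ever straddles it under any reduction sequence. This would give $d_k(m+n+\lvert Z\rvert)\geqslant d_k(m)+d_k(n)$, i.e.\ superadditivity up to a bounded index shift, which still yields convergence of $d_k(n)/n$ by a routine adjustment of the superadditive Fekete lemma. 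The connector should generalize the unique-marker device in the proof of Theorem \ref{Theorem 4 letters k}, where the factor $\mathtt{xy}$ (resp.\ $\mathtt{yx}$) occurs only once and thereby forbids straddling squares. Second, a \emph{charging argument} bounding $E$ by $o(n)$, by showing either that long straddling squares between two square-free halves are scarce, or that clearing them can be charged against reductions already counted.

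The binary case $k=2$ is precisely the duplication distance of Alon, Bruck, Farnoud, and Jain, for which the limit is known to exist; this both supports the conjecture and indicates that a subadditive or superadditive Fekete mechanism is the correct framework. The genuinely hard point — and presumably the reason the statement remains only a conjecture — is securing the boundary control uniformly in $k$, especially for $k=3$, where the alphabet is too poor to reserve a dedicated barrier letter yet rich enough to admit arbitrarily long square-free words, so that guaranteeing a connector $Z$ that \emph{stays} a barrier through every reduction sequence is the crux of the argument.
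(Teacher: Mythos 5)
This statement is posed in the paper as an open conjecture; the paper contains no proof of it, and your text does not supply one either. What you have written is a proof \emph{plan}: reduce the existence of the limit to a (sub- or super-)additivity estimate for $d_k$ and invoke Fekete's lemma. That framework is indeed the natural one (and is essentially how the $k=2$ case is handled by Alon, Bruck, Farnoud, and Jain), but the entire mathematical content of the conjecture is concentrated in the additivity estimate, and you leave that step unproved. Concretely: in the subadditive direction you need $d_k(m+n)\leqslant d_k(m)+d_k(n)+E$ with $E=o(m+n)$, and neither the ``scarcity of long straddling squares'' nor the ``charging'' argument is carried out --- no bound on $E$ is actually established. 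In the superadditive direction you need a connector $Z$ of bounded length over the \emph{same} $k$-letter alphabet such that \emph{every} optimal reduction sequence of $UZV$ splits into independent reductions of $U$ and $V$; you do not construct such a $Z$, and the marker device from the proof of Theorem \ref{Theorem 4 letters k} does not transfer, since there the barrier letters $\mathtt{x},\mathtt{y}$ are reserved letters not used inside the blocks being separated, whereas here the maximizers $U,V$ may use all $k$ letters. Moreover, an initially inert junction need not stay inert: reductions performed inside $U$ or $V$ change the letters adjacent to $Z$ and can create new straddling squares, which is exactly the difficulty you flag but do not resolve.

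A smaller point: your illustration of the obstruction is not quite the right one. If both halves reduce to the same square-free word $w$, then $U^{\ast}V^{\ast}=ww$ is cleared by a \emph{single} further reduction, so this example costs only $E=1$; the genuine worry is an accumulation of many successive junction squares, each reduction potentially exposing the next. Also note that $d(W)$ is a \emph{minimum} over reduction sequences, so for the superadditive route a lower bound on $d(UZV)$ requires an argument about \emph{all} reduction sequences, not the exhibition of one; this makes the connector construction strictly harder than the analogous upper-bound constructions elsewhere in the paper. In short, your proposal correctly frames the problem and honestly identifies the crux, but it does not prove the statement, and the statement remains open.
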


It is not hard to demonstrate that if any of the above limits exist, then it must be strictly positive.

Another parameter one could study in this topic is the out-degree of a vertex in the graph ${\overrightarrow{\mathcal{G}}}_k$. This is the same as the number of words that can be obtained from a given word $W$ in a single square reduction. Let us denote this quantity by $\deg^+(W)$, and let $\deg_k^+(n)$ be the maximum value of $\deg^+(W)$ over all words of length $n$ over a $k$-letter alphabet.

\begin{conjecture}\label{Conjecture Max Degree}
	For every $k\geqslant 2$, we have $\deg_k^+(n)\leqslant n$.
\end{conjecture}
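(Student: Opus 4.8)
The plan is to translate $\deg^+(W)$ into a counting problem about periodic factors and then to bound that count by $n$. Write $W=a_1a_2\cdots a_n$. A single square reduction removes a block of some length $\ell\geq 1$, so the resulting word has length $n-\ell$; in particular, words produced by deleting blocks of different lengths are automatically distinct, and therefore $\deg^+(W)=\sum_{\ell\geq 1}N_\ell$, where $N_\ell$ is the number of \emph{distinct} words of length $n-\ell$ reachable in one step. The first step I would carry out is a clean description of $N_\ell$. Deleting a length-$\ell$ block starting at position $i$ produces $a_1\cdots a_{i-1}a_{i+\ell}\cdots a_n$; comparing two such deletions at positions $i<j$, they give the same word precisely when $a_p=a_{p+\ell}$ for all $i\leq p\leq j-1$, that is, when $W$ has period $\ell$ on the whole stretch from $i$ to $j+\ell-1$. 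Consequently the distinct words of length $n-\ell$ are in bijection with the \emph{maximal} intervals on which $W$ has period $\ell$ and whose length is at least $2\ell$ (so that they really contain a square of half-period $\ell$). This reformulation is alphabet-independent and is, I believe, the right conceptual tool.

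The next step is to relate $N_\ell$ to the theory of runs. Each maximal period-$\ell$ interval $J$ has a smallest period $p\leq\ell$, and since $\lvert J\rvert\geq 2\ell\geq \ell+p-\gcd(p,\ell)$, Fine and Wilf forces $p\mid \ell$. Hence $J$ sits inside a unique maximal \emph{run} (maximal repetition of primitive period $p$) and corresponds to a choice $\ell=jp$ with $1\leq j\leq \lfloor E/2\rfloor$, where $E$ is the exponent of that run. This gives
$$\deg^+(W)\ \leq\ \sum_{\text{runs }\rho}\Big\lfloor\tfrac{E(\rho)}{2}\Big\rfloor\ \leq\ \tfrac12\sum_{\rho}E(\rho).$$
At this point I would invoke the combinatorics of runs (Kolpakov--Kucherov, and the Runs Theorem of Bannai et al.), which guarantees that the number of runs is below $n$ and that the sum of exponents is $O(n)$. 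This already yields $\deg^+(W)=O(n)$, i.e. a qualitative version of the conjecture.

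The hard part will be pushing the constant all the way down to $1$. The known linear bounds on $\sum_\rho E(\rho)$ carry constants strictly larger than $2$, so halving them does not reach $n$; and one cannot expect to recover the missing factor by a naive injection of reductions into $\{1,\dots,n\}$. Indeed, every simple candidate map fails: for $W=\mathtt{abcababcab}$ the two available reductions (deleting the block $\mathtt{ab}$ at positions $4$--$5$, of half-period $2$, and deleting $\mathtt{abcab}$, of half-period $5$) share the left endpoint of their last deleted block, so the map $(\ell,e)\mapsto e-\ell+1$ is not injective, and symmetric examples defeat the endpoints $e$ and $s$ as well. To obtain the sharp bound I would instead aim for an amortized charging argument tailored to the deletion structure, exploiting two sources of slack that the crude estimate above discards: the floor $\lfloor E/2\rfloor$ in place of $E$, and the frequent \emph{coincidences} among reduced words coming from different runs, which only lower $\deg^+$ below $\sum_\ell N_\ell$.

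I expect the crux to be controlling exactly these coincidences together with the interaction of overlapping periodic intervals of different periods. The essential structural fact to establish is that two maximal periodic intervals of distinct periods cannot both be ``long'' where they overlap — a statement of Fine--Wilf type — so that the contributions $N_\ell$ cannot all pile up on the same region of the word. Making this quantitative, and combining it with the halving gained from $\lfloor E/2\rfloor$, is where I would concentrate the effort; a successful such analysis would, I believe, deliver the bound $\deg_k^+(n)\leq n$ for every $k$.
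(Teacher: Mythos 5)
You have not proved the statement, and the paper does not either: this is posed as an open problem (Conjecture~\ref{Conjecture Max Degree}), so there is no proof of record to compare against, and what you submit would need to settle it outright. The reformulation you carry out is correct and genuinely useful. Deletions of different lengths give words of different lengths, so $\deg^+(W)=\sum_\ell N_\ell$; two length-$\ell$ deletions at positions $i<j$ coincide exactly when $W$ has period $\ell$ on the stretch from $i$ to $j+\ell-1$, so $N_\ell$ is the number of maximal period-$\ell$ intervals of length at least $2\ell$; and Fine--Wilf correctly forces the smallest period $p$ of such an interval to divide $\ell$, identifying the interval with a run of primitive period $p$ and exponent $E\geqslant 2\ell/p$. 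This gives $\deg^+(W)=\sum_{\rho}\lfloor E(\rho)/2\rfloor$ over the runs of $W$, and the Runs Theorem together with the linear bound on the sum of exponents yields $\deg_k^+(n)=O(n)$. That is a real (qualitative) result, and a sensible framework for attacking the conjecture.

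The gap is the entire quantitative endgame, and you acknowledge it yourself: the known bounds on $\sum_\rho E(\rho)$ have constants strictly larger than $2$, so halving does not reach $n$, and the ``amortized charging argument'' meant to close the gap is only announced, not constructed. The target inequality $\sum_\rho\lfloor E(\rho)/2\rfloor\leqslant n$ is itself a sharp open-looking statement about runs: since every run has exponent at least $2$, the left-hand side is at least the number of runs, and there are words of length $n$ with more than $0.94\,n$ runs, so there is almost no slack to absorb errors in a charging scheme. Neither the Fine--Wilf-type claim that overlapping maximal periodic intervals of distinct periods cannot both be long on their overlap, nor the bookkeeping that would convert it into the bound $n$, is stated precisely or proved. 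In short: you have a correct reduction and a proof of $\deg^+_k(n)\leqslant Cn$ for some $C>1$, but not a proof of the conjecture.
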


This problem resembles one of the most famous conjectures in Combinatorics on Words, made by Fraenkel and Simpson \cite{FraenkelSimpson}, stating that every word of length $n$ contains at most $n$ \emph{distinct} squares. Let us point however, that reduction of the same square in a word may give different results, as well as reduction of distinct squares may give the same word.

Finally, it would be nice to know if Theorem \ref{Theorem Number of Components} holds for every $k\geqslant 2$. The following Conjecture seems plausible.

\begin{conjecture}\label{Conjecture Number of Components} For every $k\geqslant 2$, the graph $\mathcal{G}_k$ has finitely many connected components.
\end{conjecture}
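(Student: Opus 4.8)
The plan is to lift the proof of Theorem \ref{Theorem Number of Components} to a general alphabet, isolating the two ingredients that made it work and tracking which survive. Call a square-free word $X$ over a $k$-letter alphabet \emph{shortenable} if it lies in the same component of $\mathcal{G}_k$ as some strictly shorter square-free word. The decisive property is that shortenability is context-free: any square reduction or duplication performed inside a factor $X$ of $AXB$ is still a legal square reduction or duplication of $AXB$, since a square in $X$ is a square in $AXB$ at a shifted position. Hence a path in $\mathcal{G}_k$ joining $X$ to a shorter square-free $Y$ becomes, after prepending $A$ and appending $B$ throughout, a path joining $AXB$ to $AYB$. Consequently, if a square-free word $W=AXB$ contains a shortenable factor $X$, then $W$ is joined to $AYB$, and therefore to the (no longer) square-free reduct of $AYB$, which is strictly shorter than $W$. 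This is exactly the up-then-down move of the $k=3$ proof through the auxiliary word $AS_iB$, now phrased abstractly.

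Granting this, the problem collapses to a single statement. Let $\mathcal{L}_k$ be the set of square-free words over a $k$-letter alphabet having \emph{no} shortenable factor; it is factor-closed. If $\mathcal{L}_k$ is finite, set $M=\max_{W\in\mathcal{L}_k}|W|$: every square-free word of length exceeding $M$ has a shortenable factor and so is joined in $\mathcal{G}_k$ to a strictly shorter square-free word; iterating, every square-free word, and hence every word via its reduct, is joined to one of the finitely many square-free words of length at most $M$, so $\mathcal{G}_k$ has finitely many components. By König's lemma over the finite alphabet, $\mathcal{L}_k$ is finite precisely when there is no infinite square-free word all of whose factors avoid every shortenable word, that is, precisely when \emph{every infinite square-free word over $k$ letters contains a shortenable factor}. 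Proving this is the whole content of the theorem.

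To attack it I would induct on $k$, with base cases $k=2,3$ supplied by Proposition \ref{Proposition Binary} and Theorem \ref{Theorem Number of Components}. Shortenability is preserved under enlarging the alphabet, because for $\ell\le k$ the graph $\mathcal{G}_\ell$ is an induced subgraph of $\mathcal{G}_k$ (square reductions are insensitive to the ambient alphabet): a word shortenable within an $\ell$-letter subalphabet is shortenable over the whole alphabet. Writing $N_{k-1}$ for the threshold given by the inductive hypothesis, it follows that any $k$-letter square-free word containing a factor of length at least $N_{k-1}$ spelled with at most $k-1$ distinct letters already contains a shortenable factor. What remains is the residual family of square-free words in which \emph{every} factor of length at least $N_{k-1}$ uses all $k$ letters, i.e. the words in which all $k$ symbols recur with bounded gaps.

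The main obstacle is exactly this residual case, and it is where the $k=3$ argument concealed its only non-conceptual step, namely the verification that the five words $X_1,\dots,X_5$ are simultaneously unavoidable and shortenable. To finish I would look for a single \emph{parametrized} gadget: a family of shortenable patterns, built in the spirit of $\mathtt{abcbabc}$ (which shares the ancestor $\mathtt{abcbabcbc}$ with the shorter $\mathtt{abc}$), together with a proof that this family is unavoidable among letter-recurrent square-free words, so that every sufficiently long such word must contain one of its members. Establishing unavoidability of a shortenable family uniformly in $k$ is the crux; without a clean construction one is reduced to a finite but rapidly growing case analysis for each alphabet size, which is why the statement is stated here only as a conjecture.
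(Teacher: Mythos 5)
This statement is posed in the paper as an open conjecture: the paper proves only the cases $k=2$ (Proposition \ref{Proposition Binary}) and $k=3$ (Theorem \ref{Theorem Number of Components}), so there is no proof of the general statement to compare against, and your proposal does not supply one either --- as you concede in your last sentence. The part you do carry out is sound and is an accurate abstraction of the paper's $k=3$ mechanism. Shortenability is indeed context-free (a path of reductions and duplications inside a factor $X$ of $AXB$ lifts to a path from $AXB$ to $AYB$, and then down to a reduct of length less than $|AXB|$); the set $\mathcal{L}_k$ of square-free words with no shortenable factor is factor-closed; and K\"onig's lemma correctly converts finiteness of $\mathcal{L}_k$ into the statement that every infinite square-free $k$-ary word contains a shortenable factor. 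This is exactly what the proof of Theorem \ref{Theorem Number of Components} does concretely for $k=3$, with the five unavoidable factors $X_1,\dots,X_5$ playing the role of the shortenable family via the ancestors $S_i$ and the shorter reducts $Y_i$. Your inductive step over the alphabet size is also fine: shortenability over a subalphabet persists over the full alphabet, so the problem localizes to square-free words in which all $k$ letters recur with bounded gaps.

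The genuine gap is the residual case, and it is not a technicality --- it is the entire content of the conjecture. Nothing in your argument produces, for general $k$ (or even for $k=4$), a family of shortenable square-free words that is unavoidable among letter-recurrent square-free $k$-ary words; the $k=3$ family in the paper was found by explicit search and its shortenability rests on the ad hoc words $S_1,\dots,S_5$. Note also that unavoidability alone would not suffice: you need an unavoidable set each of whose members admits an ancestor with a strictly shorter reduct, and there is no a priori reason such a set exists for every $k$. So the proposal should be read as a correct reduction of the conjecture to a concrete search problem (one finite verification per alphabet size, with no uniform construction in sight), not as a proof.
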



\begin{thebibliography}{9}


\bibitem{AlloucheShallit} J.-P. Allouche, J. Shallit. Automatic Sequences. Theory, Applications, Generalizations, Cambridge University Press, Cambridge, 2003.

\bibitem{AlonBFJ} N. Alon, J. Bruck, F. Farnoud, S. Jain, N. Alon, J. Bruck, F. F. Hassanzadeh, and S. Jain, Duplication distance to the root for binary sequences, IEEE Trans. Inf. Theory, 63 (2017) 7793--7803.

\bibitem{BeanEM} D. R. Bean, A. Ehrenfeucht, G. F. McNulty, Avoidable patterns in strings of symbols, Pacific J. Math. 85 (1979) 261--294.

\bibitem{BerstelThue} J. Berstel, Axel Thue's papers on repetitions in
words: a translation, Publications du LaCIM, vol. 20, Universit\'{e} du Qu%
\'{e}bec a Montr\'{e}al, 1995.

\bibitem{BerstelPerrin} J. Berstel, D. Perrin, The origins of combinatorics
on words, Europ. J. Combin. 28 (2007) 996--1022.

\bibitem{FraenkelSimpson} A. S. Fraenkel, J. Simpson, How many squares can a string contain?, J. Combin. Theory, Ser. A, 82 (1998) 112--120.

\bibitem{GrytczukDM} J. Grytczuk, Thue type problems for graphs, points, and
numbers. Discrete Math. 308 (2008) 4419--4429.

\bibitem{LeupoldMM} P. Leupold, C. Mart\'{\i}n-Vide, V. Mitrana. Uniformly bounded duplication languages, Discrete Applied Mathematics, 146 (2005) 301--310.

\bibitem{Lothaire} M. Lothaire, Combinatorics on Words, Addison-Wesley,
Reading, MA, 1983.

\bibitem{Lothaire Algebraic} M. Lothaire, Algebraic Combinatorics on Words, Cambridge University Press, Cambridge, UK, 2002.

\bibitem{NesetrilOssona} J. Ne\v{s}et\v{r}il, P. Ossona de Mendez, Sparsity, Springer, 2012.

\bibitem{Ohno} S. Ohno, Evolution by gene duplication, Springer, 1970.

\bibitem{Shur} A. M. Shur, Growth properties of power-free languages, Computer Sci. Rev. 6 (2012) 187--208.

\bibitem{Thue} A. Thue, \"{U}ber unendliche Zeichenreichen, Norske Vid. Selsk. Skr., I Mat. Nat. Kl., Christiania 7 (1906) 1--22.

\bibitem{Zhang} J. Zhang, Evolution by gene duplication: an update, Trends in Ecology and Evolution, 18 (2003) 292--298.

\end{thebibliography}
\end{document}